\newcommand{\hyref}[2]{ \hyperref[#2]{#1~\ref*{#2}} }
\newtheorem{thmIntro}{Theorem}    
\newcommand\rawcrossout[2]{\ensurestackMath{%
  \setbox0=\hbox{$#2$}%
  \crosswd=\wd0\relax%
  \setbox0=\hbox{$#1$}%
  \termwd=\wd0\relax%
  \FPdiv\myscale{\the\termwd}{\the\crosswd}%
  \stackengine{0pt}{#1}{\stretchrel*{\scalebox{\myscale}[1]{#2}}{#1}}{O}{c}{F}{T}{L}}}
\def\XX{\kern-3pt/}
\def\YY{\kern-.5pt}
\newcommand{\Canakci}{\c{C}anak\c{c}\i}
\newcommand{\Ilke}{\.{I}lke }
\newcommand{\I}{\.{I}}
\numberwithin{figure}{section}
\numberwithin{table}{section}
\theoremstyle{plain}
\newtheorem{theorem}{Theorem}[section]
\newtheorem{lemma}[theorem]{Lemma}
\theoremstyle{definition}
\newtheorem{remark}[theorem]{Remark}
\definecolor{dgreen}{rgb}{0,0.45,0}
\definecolor{beaublue}{rgb}{0.74, 0.83, 0.9}
\definecolor{darklavender}{rgb}{0.45, 0.31, 0.59}
\definecolor{darkorchid}{rgb}{0.6, 0.2, 0.8}
\definecolor{darkpastelpurple}{rgb}{0.59, 0.44, 0.84}
\definecolor{electricviolet}{rgb}{0.56, 0.0, 1.0}
\DeclareMathAlphabet{\mathpzc}{OT1}{pzc}{m}{it}
\newcommand{\cala}{\mathcal{A}}
\newcommand{\calb}{\mathcal{B}}
\newcommand{\calo}{\mathcal{O}}
\newcommand{\arr}{\ar@{-}[r]}
\renewcommand{\phi}{\varphi}
\renewcommand{\epsilon}{\varepsilon}
\newcounter{sarrow}
\tikzset{join/.code=\tikzset{after node path={%
			\ifx\tikzchainprevious\pgfutil@empty\else(\tikzchainprevious)%
			edge[every join]#1(\tikzchaincurrent)\fi}}}
\tikzset{>=stealth',every on chain/.append style={join},
	every join/.style={->}}
\tikzset{vertex/.style={circle,fill=black,inner sep=1pt,outer sep=2pt},
	tinyvertex/.style={font=\scriptsize,minimum size=6pt},
	smallvertex/.style={inner sep=1pt, font=\small},
	>=stealth',
	leadsto/.style={-angle 90,decorate,decoration=snake,very thick},
	cut/.style={decorate,decoration=saw,very thick}}
\tikzset{
	partial ellipse/.style args={#1:#2:#3}{
		insert path={+ (#1:#3) arc (#1:#2:#3)}
	}
}
\begin{document}

\title[Bases for cluster algebras from orbifolds with one marked point]{Bases for cluster algebras from orbifolds with one marked point}
\thanks{The first author was partially supported by EPSRC grants EP/N005457/1 and EP/P016014/1. Most of the work was carried out while both authors were affiliated with Durham University.}

\author{\Ilke  \Canakci}
\address{School of Mathematics, Statistics and Physics, Newcastle University, NE1 7RU, United Kingdom.}
\email{ilke.canakci@newcastle.ac.uk}

\author{Pavel Tumarkin}
\address{Department of Mathematical Sciences, Durham University, Lower Mountjoy, Stockton Road, Durham, DH1 3LE, United Kingdom.}
\email{pavel.tumarkin@durham.ac.uk}

\begin{abstract} We generalize the construction of the bangle, band and bracelet bases for cluster algebras from orbifolds to the case where there is only one marked point on the boundary.
\end{abstract}

\maketitle

{\small
\setcounter{tocdepth}{2}
\tableofcontents
}
\section{Introduction}

Cluster algebras were introduced by Fomin and Zelevinsky~\cite{FZ1} in the context of canonical bases. A cluster algebra is a commutative ring with a distinguished set of generators ({\em cluster variables}), which are grouped into overlapping finite collections of the same cardinality ({\em clusters}) connected by local transition rules ({\em mutations}).

An important problem in cluster algebra theory is a construction of good bases.
In~\cite{MSW2} Musiker, Schiffler and Williams constructed two types of bases  ({\em bangle} basis $\calb^\circ$ and {\em bracelet} basis $\calb$) for cluster algebras originating from unpunctured surfaces~\cite{FST,FT,FG1} with principal coefficients~\cite{FZ4}. A {\em band} basis $\calb^\sigma$ was introduced by D.~Thurston in~\cite{T}. All the three bases are parametrized by collections of mutually non-intersecting arcs and closed loops, and all their elements are {\em positive}, i.e. the expansion of any basis element in any cluster is a Laurent polynomial with non-negative coefficients.

In~\cite{FeTu}, Felikson and Tumarkin generalized the constructions of all the three bases to cluster algebras originating from unpunctured orbifolds with orbifold points of weight $1/2$: these are counterparts of surface cluster algebras in the non-skew-symmetric case~\cite{FeSTu3}. However, although the bases can be easily defined for any such orbifold, the proof in~\cite{FeTu} required the orbifold to have at least two boundary marked points. 

The main goal of this note is to remove the assumption on the number of marked points. Namely, we prove the following theorem.

\begin{thmIntro}[Theorem~\ref{Thm:Main}] Let $\cala(\calo)$ be the cluster algebra with principal coefficients associated to an orbifold $\calo$ with exactly one marked point on its boundary. Then the bangles $\calb^\circ$, the bands $\calb^\sigma$, and the bracelets $\calb$ are bases of the cluster algebra $\cala(\calo)$.
\end{thmIntro}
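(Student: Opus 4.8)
The plan is to reduce the one-marked-point case to the already-established multiple-marked-point case of Felikson and Tumarkin, rather than to reprove positivity and linear independence from scratch.

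The plan is to reduce the one-marked-point statement to the two-marked-point case already settled by Felikson and Tumarkin~\cite{FeTu}, rather than to rebuild the entire theory from scratch. Recall that for an orbifold with at least two boundary marked points the three collections $\calb^\circ$, $\calb^\sigma$, $\calb$ are known to be bases, and that the proof there splits into three logically independent assertions: the elements are \emph{positive}, they \emph{span} $\cala(\calo)$, and they are \emph{linearly independent}. My first step is to isolate which of these three assertions actually invoked the hypothesis of two or more marked points. Positivity and spanning rest on purely local machinery — the positive expansion formulas for the functions attached to arcs and closed curves on orbifolds, together with the skein (smoothing) relations that resolve a crossing into a signed sum of non-crossing diagrams — and neither of these depends on the number of marked points. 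I would therefore record, essentially verbatim from the general construction, that repeatedly smoothing crossings rewrites any product of the generators attached to arcs, bands, and bracelets as a finite linear combination of the indexing collections, so that $\calb^\circ$, $\calb^\sigma$, $\calb$ each span and each element expands positively in every cluster.

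The entire difficulty is then concentrated in \emph{linear independence}, which is the single place where the two-marked-point hypothesis entered. Here the plan is to add a marked point. Given $\calo$ with its single boundary marked point $M$, let $\calo'$ be the orbifold obtained by placing a second marked point $M'$ on the same boundary component, so that $\calo'$ has two marked points and Felikson--Tumarkin applies. Every triangulation $T$ of $\calo$ extends to a triangulation $T'$ of $\calo'$ by adjoining arcs incident to $M'$, and the quiver of $T$ appears as a subquiver of that of $T'$; this yields a rooted cluster morphism $\varphi\colon \cala(\calo) \to \cala(\calo')$. Moreover an arc or closed curve on $\calo$, having no endpoint at $M'$, is in particular an admissible arc or closed curve on $\calo'$, and since adding a marked point preserves non-crossing (compatibility), each indexing collection for $\calo$ is sent \emph{injectively} to an indexing collection for $\calo'$.

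With these two ingredients the transfer of linear independence runs in the standard way. I would check that $\varphi$ sends the candidate basis element $b_c$ of $\cala(\calo)$ indexed by a collection $c$ to the genuine basis element $b_c'$ of $\cala(\calo')$ indexed by the same collection: on arcs this amounts to verifying that $\varphi$ carries the cluster variable of $\gamma$ in $\cala(\calo)$ to the cluster variable of $\gamma$ in $\cala(\calo')$, which follows from the subquiver inclusion. Granting this, a hypothetical nontrivial relation $\sum_c a_c\, b_c = 0$ in $\cala(\calo)$ would give $\sum_c a_c\, b_c' = \varphi\!\left(\sum_c a_c\, b_c\right) = 0$ in $\cala(\calo')$, and the linear independence of the $b_c'$ supplied by~\cite{FeTu} forces every $a_c = 0$. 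As a cross-check (and to handle the place where a single shared endpoint makes near-degeneracies most likely), I would confirm the separation at the level of $g$-vectors, reading the $g$-vector of $b_c$ off the shear-coordinate/intersection data of $c$ and noting that this data is unchanged by the passage from $\calo$ to $\calo'$, so that distinct collections retain distinct leading terms.

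The step I expect to be the main obstacle is the behavior of the \emph{closed curves} — the bands and bracelets — under the morphism $\varphi$. Unlike arcs, a bracelet does not meet the boundary, so the combinatorial correspondence is transparent; the subtlety is that the band and bracelet functions are not cluster variables but are produced from the multiplication in the cluster algebra through Chebyshev-type recursions and the smoothing of self-crossings. One must therefore verify that $\varphi$ intertwines these recursions \emph{exactly}, so that the bracelet element for $\calo'$ is the honest image of the bracelet element for $\calo$ rather than that image plus lower-order correction terms; any failure of this intertwining would break the identity $\varphi(b_c)=b_c'$ precisely on the band and bracelet indices. Establishing this compatibility, and with it the clean direction of the morphism needed to propagate independence, is where the real work of the argument lies.
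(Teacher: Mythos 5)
There is a genuine gap, and it is a misidentification of where the one--marked--point hypothesis actually bites. You claim that positivity and spanning ``rest on purely local machinery'' and can be recorded ``essentially verbatim,'' and that linear independence is ``the single place where the two-marked-point hypothesis entered.'' The paper asserts exactly the opposite: linear independence is quoted directly from~\cite[Section 8]{FeTu}, with the explicit remark that the number of boundary points is irrelevant for that proof. The step that genuinely fails with one marked point is the one you dismiss as routine, namely showing that the elements of $\calb^\circ$ indexed by closed loops \emph{belong to} $\cala(\calo)$ at all. With principal coefficients these elements are a priori only Laurent polynomials; the skein/smoothing relations rewrite a product such as $UV$ as $L$ plus correction terms whose coefficients are \emph{non-invertible} monomials in the $y$-variables (e.g.\ $UV=L+(y_4X+x_5)(y_5Y+y_5\tilde y x_4)$ in the paper's notation), so one cannot simply solve for $L$ inside the cluster algebra. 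The argument of~\cite{MSW2}/\cite{FeTu} that establishes this containment uses a second marked point, and even the one-marked-point surface argument of~\cite{CLS} needed invertible coefficients. Supplying this containment without either assumption --- via the specific triangulations and the snake-graph-calculus identities of Sections~\ref{Sec:SnakeGraphs}--\ref{Sec:ArbitraryGenus}, culminating in Theorem~\ref{Thm:LgenArb} and Lemma~\ref{Lm-any} --- is the entire content of the paper, and your proposal contains no substitute for it.

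Your replacement mechanism for independence is also not sound as stated, although it is in any case aimed at the wrong target. Adding a marked point $M'$ does not give an injective ring homomorphism $\varphi\colon\cala(\calo)\to\cala(\calo')$ sending the element attached to a curve to the element attached to the same curve: completing a triangulation $T$ of $\calo$ to a triangulation $T'$ of $\calo'$ changes the quadrilaterals, hence the exchange relations, and the Laurent expansion of a fixed arc $\gamma$ with respect to $T$ differs from its expansion with respect to $T'$ (its snake graph acquires extra tiles from crossings with the new arcs incident to $M'$); moreover the principal-coefficient semifields of the two algebras have different ranks, so there is no coefficient-preserving comparison of the kind you need. Fortunately none of this is required: once containment (Lemma~\ref{Lm-any}) is established, the paper's route --- reduce $\calb^\sigma$ and $\calb$ to $\calb^\circ$ by the unimodular transformation of~\cite[Section 6.2]{FeTu}, and import linear independence unchanged from~\cite[Section 8]{FeTu} --- closes the proof without any auxiliary orbifold $\calo'$.
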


The proof mainly follows the one of~\cite[Theorem 2]{CLS} where the authors  proved a similar statement for cluster algebras from unpunctured surfaces with exactly one marked point. In particular, the main ingredient is the following result.

\begin{thmIntro}[Theorem~\ref{Thm:LgenArb}] The Laurent polynomial associated to the essential loop around the boundary in an orbifold with one marked point belongs to the cluster algebra $\cala(\calo)$. 
\end{thmIntro}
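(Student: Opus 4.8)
The plan is to adapt the argument of \cite[Theorem~2]{CLS} to the orbifold setting: I would produce a skein relation, localised at the unique marked point, that expresses the Laurent polynomial $x_\ell$ of the essential boundary loop $\ell$ as a combination of elements already known to lie in $\cala(\calo)$, and then solve for $x_\ell$. To this end, fix a loop $\gamma$ based at the single marked point $M$ that is freely homotopic to $\ell$, so that $\gamma$ is obtained from $\ell$ by cutting it open at $M$. Being a genuine (tagged) arc of $\calo$, the curve $\gamma$ has an associated cluster variable, and hence $x_\gamma\in\cala(\calo)$; the same applies to any arc produced from $\gamma$ by a small isotopy.

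The heart of the computation is to resolve the product $x_\gamma\, x_{\gamma'}$, where $\gamma'$ is a parallel push-off of $\gamma$, by applying the skein relations at the intersections that the two copies create at $M$. The two strands of $\gamma$ and the two strands of $\gamma'$ emanate from $M$, and smoothing there has two outcomes: one reconnection detaches the strands from $M$ and closes them up into the free loop $\ell$ (possibly together with a contractible loop, which evaluates to an integer constant), while the complementary reconnection yields a disjoint union of arcs whose Laurent polynomial is a product of cluster variables. Recording the principal-coefficient $y$-monomial attached to each resolution, this produces an identity of the shape
\[
 x_\gamma\, x_{\gamma'} \;=\; x_\ell \;+\; P,
\]
in which $P$ is an explicit $\bZ[y]$-linear combination of products of cluster variables and integer constants. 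As $x_\gamma x_{\gamma'}$ and $P$ both lie in $\cala(\calo)$, solving this identity gives $x_\ell = x_\gamma x_{\gamma'} - P \in \cala(\calo)$, which is the assertion.

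I expect the main obstacle to be the orbifold points of weight $1/2$, which are exactly the feature separating this statement from its surface prototype. When $\gamma$ encircles such points, the smoothings at $M$ create closed curves winding around orbifold points, and the ordinary skein relation must be replaced by its orbifold refinement from \cite{FeTu}. One must then check that every curve arising in the resolution either closes up to $\ell$ or reduces --- through the orbifold skein relations together with the evaluation of a loop enclosing a single weight-$1/2$ point --- to elements of $\cala(\calo)$, so that $P$ genuinely lies in $\cala(\calo)$ rather than merely in the ambient Laurent ring. The second delicate point is the coefficient bookkeeping: one must verify that the $y$-monomials are arranged so that $x_\ell$ occurs with coefficient $1$ in the displayed identity, since a nontrivial $y$-monomial coefficient is not invertible in $\cala(\calo)$ and would obstruct solving for $x_\ell$.
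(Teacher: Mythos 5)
Your high-level strategy (manufacture a relation in which the boundary loop occurs with coefficient $1$, then solve for it) is the paper's strategy too, and you correctly flag the two delicate points --- orbifold skein relations and non-invertibility of principal coefficients. But your construction collapses at its first step, and it does so precisely because of the hypothesis of the theorem. With exactly one marked point $M$, the curve $\gamma$ obtained by cutting the essential boundary loop $\ell$ open at $M$ is \emph{not} a genuine arc: the region between $\gamma$ and the boundary circle is a disk (an Euler characteristic count on the annular neighbourhood of the boundary shows this), so $\gamma$ is isotopic rel endpoints to the boundary segment, i.e.\ it is a trivial arc with Laurent polynomial $1$ and no associated cluster variable. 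This is exactly what separates the one-marked-point case from the case of two or more marked points, where the cut-open loop cuts off a polygon containing the remaining marked points and hence \emph{is} a genuine arc; assuming $\gamma$ is an arc assumes away the entire difficulty the theorem addresses. Moreover, even ignoring triviality, $\gamma$ and its parallel push-off $\gamma'$ meet only at $M$ with their strands in non-alternating cyclic order $a_1,b_1,b_2,a_2$ around $M$, so they are \emph{compatible}: there is no crossing, at $M$ or elsewhere, for a skein relation to resolve. And if one did force a reconnection of the four strands, the two non-crossing reconnections produce either both curves closed up (two parallel copies of $\ell$, a bangle of multiplicity two) or a single contractible loop --- never $\ell$ with coefficient $1$ --- so the identity $x_\gamma x_{\gamma'}=x_\ell+P$ you posit cannot arise this way.

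The paper's proof has to work much harder at exactly this point. It fixes a specific triangulation and two \emph{non-parallel} arcs $U$ and $V$ which wind around the handles and orbifold points and cross once in the triangle adjacent to the boundary; snake graph calculus (grafting and self-grafting, over several steps) then yields $UV=L+(y_4X+x_5)(y_5Y+y_5\tilde{y}x_4)$, where $L$ appears with coefficient $1$ as required. Crucially, the correction term is \emph{not} a $\bZ[y]$-linear combination of products of cluster variables and integers, contrary to what you expect of your $P$: it involves the Laurent polynomials $X$ and $Y$ of closed curves going around the handle, and proving that $y_4X$ and $y_5Y$ lie in $\cala(\calo)$ is a separate, substantial step (Lemma~\ref{Lem:XY} and Lemma~\ref{Lem:XYArb}), each requiring further multi-step snake-graph resolutions of other crossing arcs. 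So the gap in your proposal is twofold: the base curve you multiply is not an arc of the orbifold, and the structure of the correction terms is essentially more complicated than products of cluster variables, which is where the real content of the paper's argument lies.
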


To prove Theorem~\ref{Thm:LgenArb}, we use snake graph calculus~\cite{CS, CS2,CS3}. One of the features of our proof is that, unlike \cite{CLS}, we do not require the coefficients to be invertible.
 
The paper is organized as follows. In Section \ref{Sec:SnakeGraphs}, we introduce snake and band graphs associated to curves in a triangulated orbifold with orbifold points of weight $1/2$. Section \ref{Sec:Genus1} is devoted to show that the Laurent polynomial associated to the essential loop around the boundary lies in the cluster algebra when the genus is $1$. Section \ref{Sec:ArbitraryGenus} generalizes the result of Section \ref{Sec:Genus1} to arbitrary genus, and finally in Section \ref{Sec:ArbitraryLoop} we show that Laurent polynomials associated to all essential loops belong to the cluster algebra and hence give bases for cluster algebras associated to orbifolds with $1$ marked point on their boundary.  

{\it Acknowledgements:} The authors would like to thank the organisers of the workshop on Quivers \& Bipartite Graphs in London in $2016$ where this project was initiated, and also to Ralf Schiffler for encouraging them to work on this project.

\section{Snake graphs associated to triangulations of orbifolds}
\label{Sec:SnakeGraphs}
In this section, we generalize the snake graph formula of \cite{MSW, MSW2} to the orbifold case. The authors of \cite{MSW} associate snake graph and band graph to every arc (may have self-crossings) and essential loop in the surface, respectively, and give an explicit formula for Laurent polynomials in the cluster algebra corresponding to these curves. This expansion formula is parametrized by {\em perfect matchings} of snake and band graphs, see Section 3.1 in \cite{MSW}.

We first show that the cluster variable associated to a pending arc $\tau'$ with local configuration given in Figure~\ref{Fig:FlipPending} can be given as an instance of \cite{MSW} expansion formula. Let $\tau$ be the pending arc corresponding to the mutation (flip) of $\tau'$. We consider an unfolding as in Figure~\ref{Fig:FlipPending} (see \cite{FeSTu2}) and associate the corresponding snake graph $G_{\tau}.$    

The exchange relation corresponding to the mutation of the pending arc $\tau'$ coincides with the perfect matching formula, that is
$$x_{\tau}x_{\tau'}=x^2_a+x^2_b,$$ 
where $x^2_a$ is the associated weight of the perfect matching with two vertical edges of $G_{\tau}$ and $x^2_b$ is the associated weight of the perfect matching with two horizontal edges of $G_{\tau}$ in the \cite{MSW} formula. This expansion formula can be extended for arcs and closed loops in orbifolds without any alterations using unfoldings \cite{FeSTu2}. Namely, the procedure of assigning a snake graph to a non-pending arc is the following: we take a connected component of a lift of the arc in any unfolding, take the corresponding snake graph in the obtained surface cluster algebra, and then specialize variables to get a snake graph for the initial arc. This is always possible since an unfolding exists for any orbifold with boundary~\cite{FeSTu3}. Assigning a band graph to a closed (or semi-closed) loop requires a bit more accuracy: we need an existence of such a lift $\gamma'$ of the loop $\gamma$ in the unfolding that the Laurent polynomial of $\gamma$ is obtained from the one of $\gamma'$ by a specialization of variables. This is the case when the number of connected components of $\gamma'$ is equal to the degree of the covering giving rise to the unfolding. However, such an unfolding always exists in view of \cite[Lemma 4.7]{FeTu}, so the same procedure as for arcs holds for band graphs as well.

\begin{figure} [ht]
	\includegraphics[width=.75\textwidth]{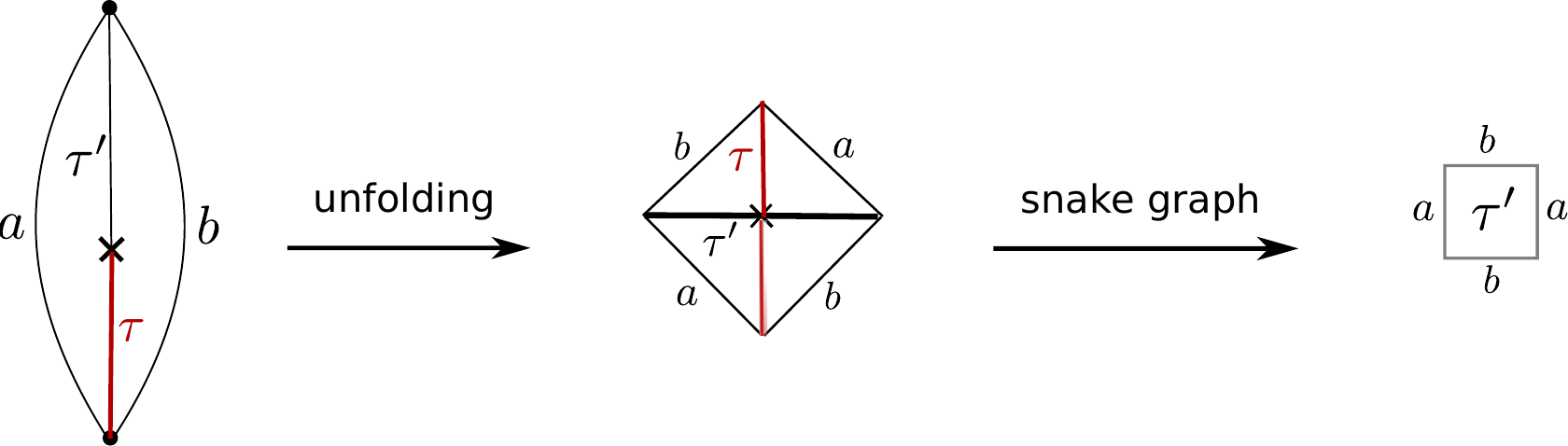}
	\psfrag{a}{$a$}
	\psfrag{b}{$b$}
	\psfrag{T}{$\tau$}
	\psfrag{T'}{$\tau'$}
	\caption{Flip of a pending arc $\tau'$, its unfolding and the corresponding snake graph $G_{\tau}$.}
	\label{Fig:FlipPending}
\end{figure}
\section{Genus $1$ case}
\label{Sec:Genus1}

The aim of this section is to prove the following theorem.  

\begin{theorem} \label{Thm:Lgen1}
Let $\calo$ be an orbifold of genus $1$ and $\cala(\calo)$ be its associated cluster algebra with principal coefficients.
The Laurent polynomial associated to the essential loop around the boundary belongs to the cluster algebra $\cala(\calo).$
\end{theorem}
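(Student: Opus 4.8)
The plan is to use the snake graph calculus of~\cite{CS,CS2,CS3} to express the Laurent polynomial $x_\ell$ of the essential boundary loop $\ell$ as an explicit integer-polynomial combination of cluster variables, so that its membership in $\cala(\calo)$ becomes manifest. Since band graphs of loops in an orbifold are produced by passing to an unfolding and specializing variables (Section~\ref{Sec:SnakeGraphs}), and a suitable lift of $\ell$ exists by~\cite[Lemma~4.7]{FeTu}, I would carry out the entire argument in the unfolded genus-$1$ surface double cover and specialize at the end. This reduces the weight-$1/2$ orbifold statement to a purely surface-theoretic snake graph computation compatible with the orbifold points.

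First I would fix a triangulation $T$ of $\calo$ adapted to $\ell$ and record its finitely many arcs together with their snake graphs and the band graph $G_\ell$ of the boundary loop. The heart of the argument is to select a pair of crossing arcs $\gamma_1,\gamma_2$ of $T$ (equivalently, a single curve with one self-crossing) whose smoothings realize $\ell$: applying the crossing-resolution identity of snake graph calculus then yields a skein-type relation of the form
$$x_{\gamma_1}x_{\gamma_2} = x_\ell + x_{\delta_1}x_{\delta_2},$$
where the band graph $G_\ell$ arises from one smoothing of the crossing and $\delta_1,\delta_2$ are genuine arcs of $T$ produced by the other smoothing. The left-hand side is a product of cluster variables and the correction term $x_{\delta_1}x_{\delta_2}$ is likewise a product of cluster variables, so the identity exhibits $x_\ell$ as an integer polynomial in cluster variables and places it in $\cala(\calo)$.

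I expect the main obstacle to be the combinatorial bookkeeping that isolates $x_\ell$ with unit coefficient. The crossing-resolution formula of~\cite{CS} generally outputs each smoothing weighted by a Laurent monomial, so one must choose the crossing $\gamma_1\cap\gamma_2$ and the triangulation so that $G_\ell$ is grafted in with weight exactly $1$ and that the second smoothing is a clean product of arcs rather than $\ell$ multiplied by an extra factor. Securing this is precisely what lets the argument dispense with the invertibility of coefficients required in~\cite{CLS}. A secondary point demanding care is verifying that this resolution descends through specialization from the unfolding, namely that the lifts of $\ell$, $\gamma_1$, and $\gamma_2$ smooth compatibly with the covering so that the surface identity specializes to the desired orbifold relation.
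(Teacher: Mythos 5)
Your overall framework---fix a good triangulation, resolve crossings via snake graph calculus, and isolate $L$ with unit coefficient so that no inversion of coefficients is needed---matches the paper's strategy, but your proof has a genuine gap at its central step. You posit a single skein-type relation
$$x_{\gamma_1}x_{\gamma_2} = x_\ell + x_{\delta_1}x_{\delta_2}$$
in which the complementary smoothing is a ``clean product of arcs,'' i.e.\ of cluster variables, so that membership of $x_\ell$ in $\cala(\calo)$ follows in one stroke. No such relation is available in the one-marked-point setting, and this is exactly the difficulty that makes the theorem nontrivial. Note first that smoothing a crossing of two distinct arcs produces two arcs in \emph{each} resolution; a closed loop such as $L$ can only appear after smoothing self-crossings, and when it does, the other resolution of the relevant multi-step resolution process inevitably contains \emph{closed curves} wrapping the handle and the orbifold points, not arcs of $T$. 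Concretely, the paper's computation (Figures~\ref{Fig:Gen1_T} and~\ref{Fig:SGLoopGen1}) yields
$$UV = L + (y_4X + x_5)(y_5Y + y_5\tilde{y}x_4),$$
where $X$ and $Y$ are closed loops, so their Laurent polynomials are \emph{not} cluster variables and the relation by itself only rewrites $L$ in terms of other elements not yet known to lie in $\cala(\calo)$. The paper must then prove a separate lemma (Lemma~\ref{Lem:XY}), itself requiring three further rounds of smoothings and graftings for each of $X$ and $Y$, to establish that the specific combinations $y_4X$ and $y_5Y$ (with their $y$-monomial prefactors, which matters precisely because coefficients are not invertible) belong to the cluster algebra. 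Your proposal assumes away this second layer, which is the heart of the argument; had a relation of your proposed form existed, the one-marked-point case would have presented no difficulty beyond the many-marked-point case already treated in~\cite{MSW2} and~\cite{FeTu}, and neither~\cite{CLS} nor this paper would need their special arguments.

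A secondary, more minor, point: your plan to run the whole computation in an unfolding and specialize at the end is workable in principle but is not quite how the paper proceeds. The unfolding is used once, in Section~\ref{Sec:SnakeGraphs}, to \emph{define} the snake and band graphs attached to arcs and loops of the orbifold (with the care required by \cite[Lemma 4.7]{FeTu} for loops); after that, all identities are carried out directly on these orbifold snake graphs, with the pending arcs handled by the explicit graph of Figure~\ref{Fig:FlipPending} and the several-orbifold-point case obtained by lengthening certain zig-zag subgraphs. Working upstairs throughout would force you to track compatibility of every resolution with the covering involution, which your proposal flags but does not resolve.
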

\begin{proof} We choose a particular triangulation $T$ of the orbifold $\calo$ of genus $1$ with exactly one marked point on its boundary given in Figure~\ref{Fig:Gen1multOrb}. Let $L$ be the Laurent polynomial associated to the closed loop around the boundary. We want to show $L$ is in the cluster algebra $\cala(\calo).$ We will show this by applying resolutions and grafting of snake and band graphs associated to the arcs indicated in Figure~\ref{Fig:Gen1multOrb}.  We refer to \cite{CS,CS2,CS3} for the snake graph calculus tools we use.
\vspace{-3mm}
\begin{figure} [ht]
	\includegraphics[width=.21\textwidth]{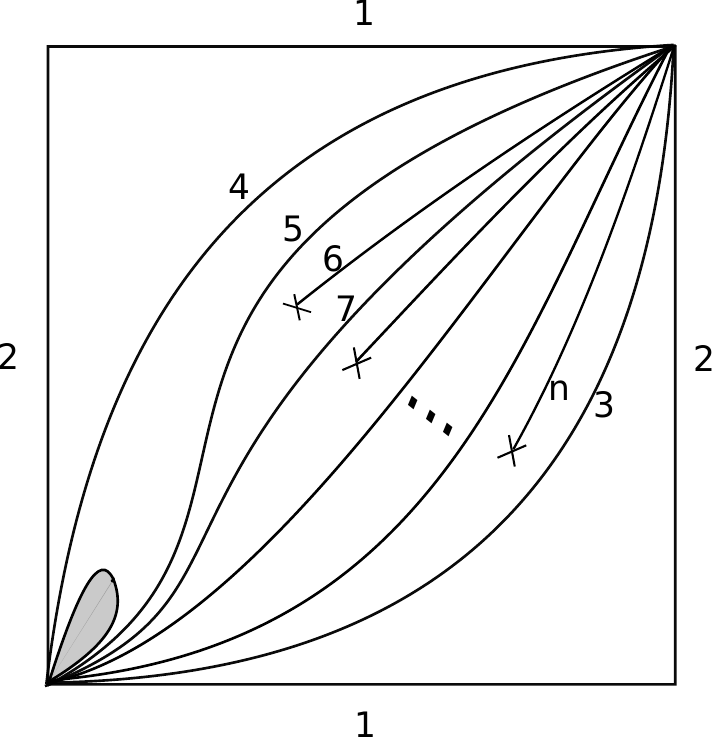}
	\psfrag{U}{$U$}
	\psfrag{V}{$V$}
	\caption{Initial triangulation for the surface with several orbifold points.}
	\psfrag{U}{$U$}
	\psfrag{V}{$V$}
	\label{Fig:Gen1multOrb}
\end{figure}

We will abuse notation and denote arcs in the orbifold and the corresponding Laurent polynomials of these arcs by the same capital letter. 

In Figure~\ref{Fig:SGLoopGen1}, we explicitly indicate each grafting or self-grafting of snake and band graphs associated to smoothings of crossings indicated in Figure~\ref{Fig:Gen1_T}. Note that, for the sake of simplicity, these identities are considered only when the surface has precisely one orbifold point. We also remark that identities given in Figure~\ref{Fig:SGLoopGen1} are valid in the associated cluster algebra $\cala(\calo)$ when considered with \emph{trivial coefficients}. We use snake graph calculus to lift these identities to cluster algebras with \emph{principal coefficients}.
\vspace{-3mm}
\begin{figure}[!b]
	\includegraphics[width=.52\textwidth]{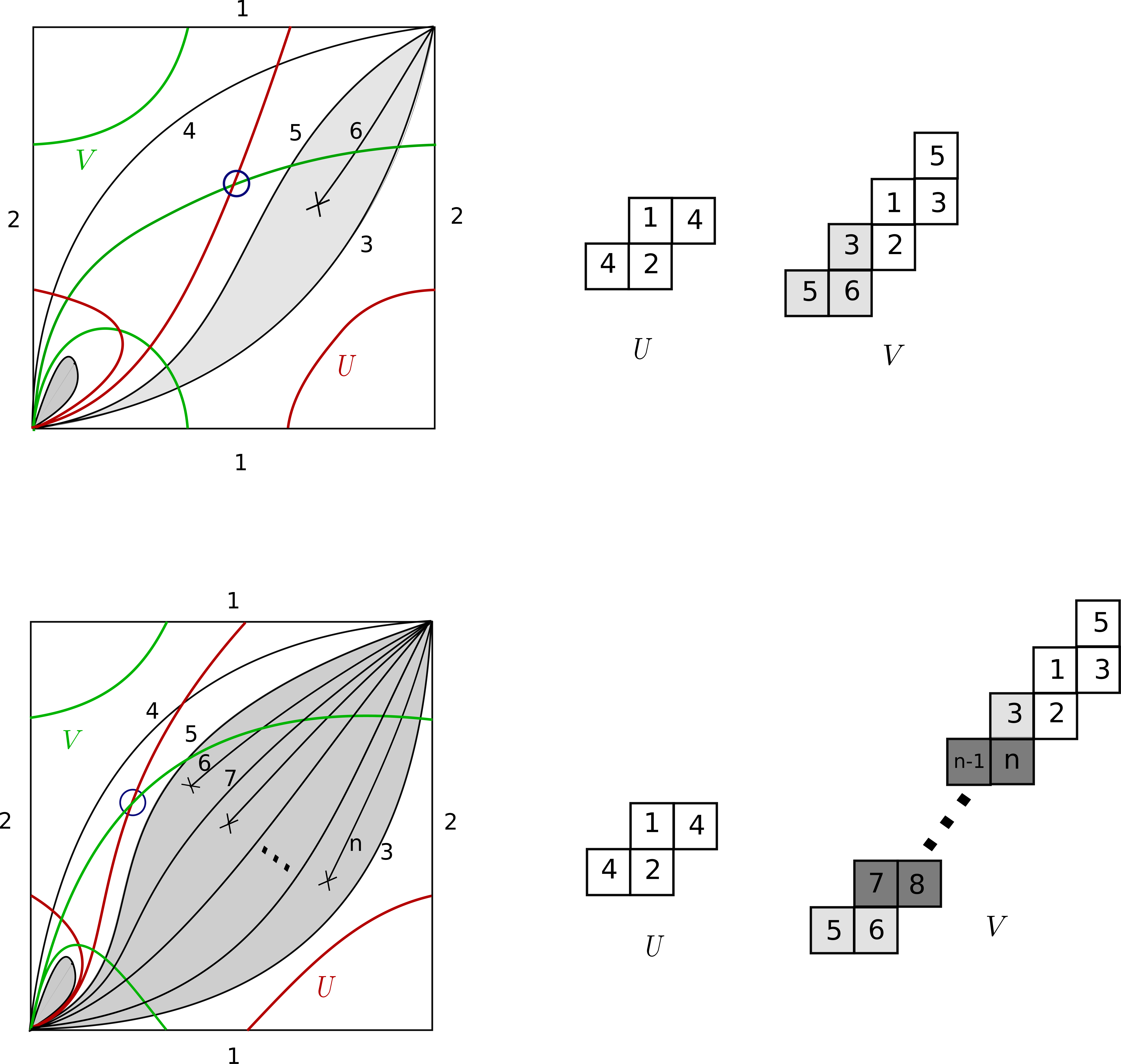}
	\caption{Snake graphs associated to the arcs $U$ and $V$ for surfaces with one orbifold point (top) and for a surface with many orbifold points (bottom).}
	\label{Fig:Arc_SG}
\end{figure}

\begin{figure}[!t] 
	\includegraphics[width=.9\textwidth]{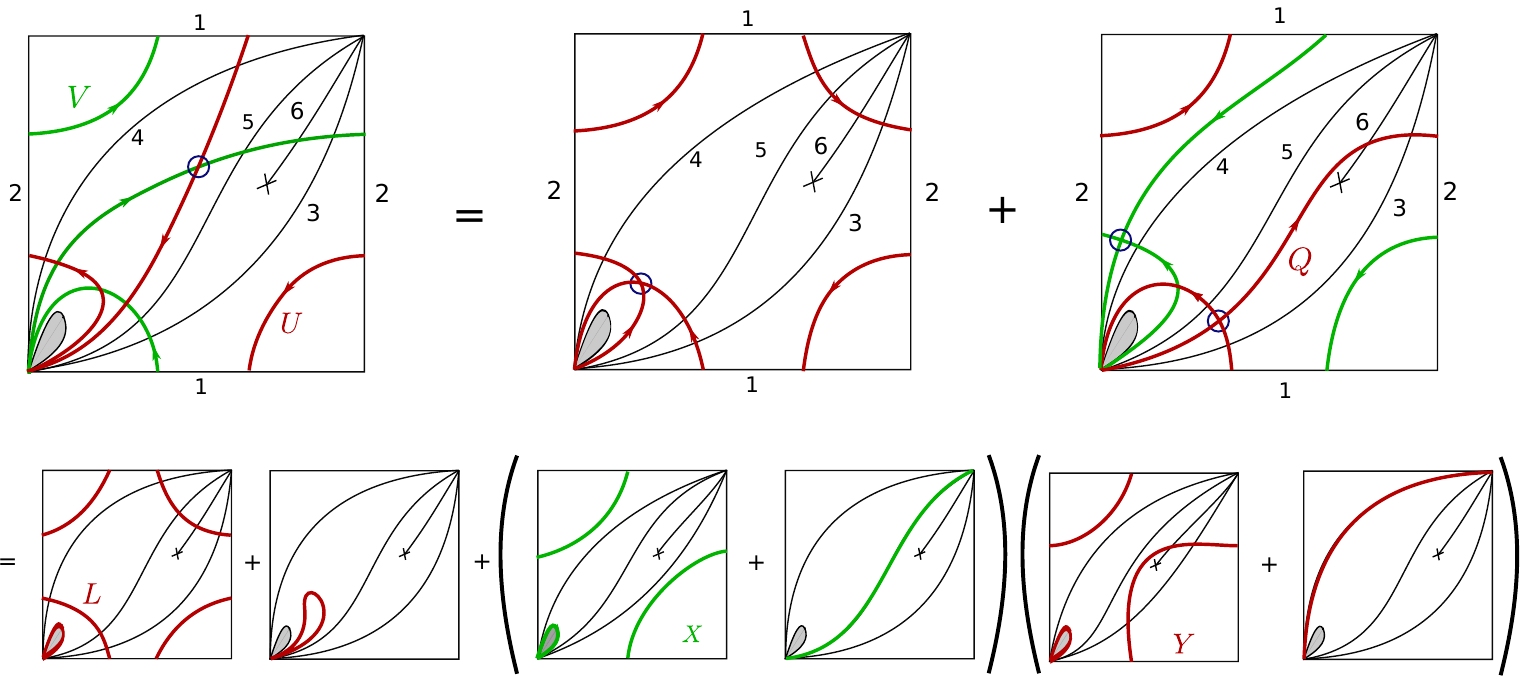}
	\caption{Geometric realization of the identities given in Figure~\ref{Fig:SGLoopGen1} in terms of arcs in the surface. Step 1: smoothing of the crossing of the arcs $V$ (green) and $U$ (red), Step 2: smoothings of the three self-crossing arcs simultaneously.}
\label{Fig:Gen1_T}
\end{figure}

Comparing snake graphs associated to the arcs $U$ and $V$ when the surface has one orbifold point and when it has several orbifold points (Figure~\ref{Fig:Arc_SG}), one can observe that the corresponding snake graphs associated to the arc $U$ is exactly the same for both surfaces, and the snake graph associated to $V$ differs by a zig-zag piece corresponding to the crossings of the arcs $7,8,\dots,n$ when the surface has several orbifold points. Therefore, the identities in terms of snake graphs will be similar when there are several orbifold points in the surface, which can be recovered by replacing every zig-zag snake subgraph $5,6,3$ by the zig-zag snake graph $5,6,7,\dots,n,3$.

More precisely, the identity in Figure~\ref{Fig:Gen1_T} translates to 
$$UV=L+(y_4X+x_5)(y_5Y+y_5\tilde y x_4)$$
where $\tilde y= \displaystyle\prod_{i\mbox{ tile in } G_{Q}} y_i$ and the curve $Q$, indicated in Figure~\ref{Fig:Gen1_T}, is obtained by smoothing $U$ and $V.$ In particular, if the genus is $1$ then $ \tilde y=y_1y_2y^2_3y_5y_6$.


\newsavebox{\myboxV}
\sbox{\myboxV}{%
   \begin{tikzpicture}[scale=.5]
    \draw (0,0)--(1,0)--(2,0)--(2,2)--(1,2)--(1,0) (0,0)--(0,1)--(3,1)--(3,2)--(2,2);
    \node at (.5,.5){$4$};
    \node at (1.5,.5){$2$};
    \node at (1.5,1.5){$1$};
    \node at (2.5,1.5){$4$};
    \node at (0,.5)[left, scale=.6]{$B$};
    \node at (.5,0)[below, scale=.6]{$5$};
    \node at (.5,1)[above, scale=.6]{$2$};

    \node at (2.5,2)[above, scale=.6]{$B$};    
    \node at (3,1.5)[right, scale=.6]{$5$};
    \node at (2.5,1)[below, scale=.6]{$1$};
    \end{tikzpicture}
 }

\newsavebox{\myboxW}
\sbox{\myboxW}{%
\begin{tikzpicture}[scale=.5]
    \draw (0,0)--(1,0)--(2,0)--(2,2)--(1,2)--(1,0) (0,0)--(0,1)--(3,1)--(3,2)--(2,2) (2,2)--(2,3)--(4,3)--(4,2)--(3,2)--(3,4)--(4,4)--(4,3);
    \node at (.5,.5){$5$};
    \node at (1.5,.5){$6$};
    \node at (1.5,1.5){$3$};
    \node at (2.5,1.5){$2$};
    \node at (2.5,2.5){$1$};
    \node at (3.5,2.5){$3$}; 
    \node at (3.5,3.5){$5$}; 
    \node at (.5,0)[below, scale=.6]{$B$};
    \end{tikzpicture}
}

\newsavebox{\myboxK}
\sbox{\myboxK}{%
\begin{tikzpicture}[scale=.5]
    \draw (0,0)--(1,0)--(2,0)--(2,2)--(1,2)--(1,0) (0,0)--(0,1)--(3,1)--(3,2)--(2,2) (2,2)--(2,3)--(4,3)--(4,2)--(3,2)--(3,4)--(4,4)--(4,3)--(5,3)--(5,5)--(4,5)--(4,4)--(6,4)--(6,6)--(5,6)--(5,5)--(6,5);
    \node at (.5,.5){$4$};
    \node at (1.5,.5){$2$};
    \node at (1.5,1.5){$1$};
    \node at (2.5,1.5){$4$};
    \node at (2.5,2.5){$5$};
    \node at (3.5,2.5){$6$}; 
    \node at (3.5,3.5){$3$}; 
    \node at (4.5,3.5){$2$}; 
    \node at (4.5,4.5){$1$}; 
    \node at (5.5,4.5){$3$}; 
    \node at (5.5,5.5){$5$}; 
    \end{tikzpicture}
}

\newsavebox{\myboxS}
\sbox{\myboxS}{%
\begin{tikzpicture}[scale=.5]
    \draw (0,0)--(1,0)--(2,0)--(2,2)--(1,2)--(1,0) (0,0)--(0,1)--(2,1);
    \node at (.5,.5){$4$};
    \node at (1.5,.5){$2$};
    \node at (1.5,1.5){$1$};
    \node at (0,.5)[left, scale=.6]{$B$};
    \node at (.5,0)[below, scale=.6]{$5$};
    \node at (.5,1)[above, scale=.6]{$2$};
    \node at (1.5,2)[above, scale=.6]{$4$};
    \end{tikzpicture}
}  

\newsavebox{\myboxT}
\sbox{\myboxT}{%
\begin{tikzpicture}[scale=.5]
    \draw (1,0)--(2,0)--(2,2)--(1,2)--(1,0) (1,1)--(3,1)--(3,2)--(2,2) (2,2)--(2,3)--(4,3)--(4,2)--(3,2)--(3,4)--(4,4)--(4,3);
    \node at (1.5,.5){$6$};
    \node at (1.5,1.5){$3$};
    \node at (2.5,1.5){$2$};
    \node at (2.5,2.5){$1$};
    \node at (3.5,2.5){$3$}; 
    \node at (3.5,3.5){$5$}; 
   \node at (4,3.5)[right, scale=.6]{$4$};
   \node at (3.5,4)[above, scale=.6]{$B$};
    \end{tikzpicture}
}

\newsavebox{\myboxL}
\sbox{\myboxL}{%
\begin{tikzpicture}[scale=.5]
    \draw (0,0)--(1,0)--(2,0)--(2,2)--(1,2)--(1,0) (0,0)--(0,1)--(3,1)--(3,2)--(2,2) (2,2)--(2,3)--(4,3)--(4,2)--(3,2)--(3,4)--(4,4)--(4,3)--(5,3)--(5,5)--(4,5)--(4,4)--(6,4)--(6,6)--(5,6)--(5,5)--(6,5);
    \node at (.5,.5){$4$};
    \node at (1.5,.5){$2$};
    \node at (1.5,1.5){$1$};
    \node at (2.5,1.5){$4$};
    \node at (2.5,2.5){$5$};
    \node at (3.5,2.5){$6$}; 
    \node at (3.5,3.5){$3$}; 
    \node at (4.5,3.5){$2$}; 
    \node at (4.5,4.5){$1$}; 
    \node at (5.5,4.5){$3$}; 
    \node at (5.5,5.5){$5$}; 
    \node at (0,0){$\bullet$};     
    \node at (0,1){$\bullet$};
    \node at (6,5){$\bullet$};     
    \node at (6,6){$\bullet$};
    \end{tikzpicture}
}

\newsavebox{\myboxX}
\sbox{\myboxX}{%
\begin{tikzpicture}[scale=.5]
    \draw (0,0)--(1,0)--(1,0)--(1,2)--(0,2)--(0,0) (0,1)--(1,1);
    \node at (.5,.5){$1$};
    \node at (.5,1.5){$2$};
    \node at (0,0){$\bullet$};     
    \node at (1,0){$\bullet$};
    \node at (0,2){$\bullet$};     
    \node at (1,2){$\bullet$};
    \end{tikzpicture}
}

\newsavebox{\myboxY}
\sbox{\myboxY}{%
\begin{tikzpicture}[scale=.5]
    \draw (1,0)--(2,0)--(2,2)--(1,2)--(1,0) (1,1)--(3,1)--(3,2)--(2,2) (2,2)--(2,3)--(4,3)--(4,2)--(3,2)--(3,3);
    \node at (1.5,.5){$6$};
    \node at (1.5,1.5){$3$};
    \node at (2.5,1.5){$2$};
    \node at (2.5,2.5){$1$};
    \node at (3.5,2.5){$3$}; 
    \node at (1,0){$\bullet$};     
    \node at (2,0){$\bullet$};
    \node at (4,3){$\bullet$};     
    \node at (4,2){$\bullet$};
    \end{tikzpicture}
}

\newsavebox{\myboxB}
\sbox{\myboxB}{%
\begin{tikzpicture}[scale=.5]
\draw (0,0)--(1,0);
\node at (.5,0)[above, scale=.6]{$B$};
\end{tikzpicture}
}

\newsavebox{\myboxFour}
\sbox{\myboxFour}{%
\begin{tikzpicture}[scale=.5]
\draw (0,0)--(1,0);
\node at (.5,0)[above, scale=.6]{$4$};
\end{tikzpicture}
}

\newsavebox{\myboxFive}
\sbox{\myboxFive}{%
\begin{tikzpicture}[scale=.5]
\draw (0,0)--(1,0);
\node at (.5,0)[above, scale=.6]{$5$};
\end{tikzpicture}
}

\newsavebox{\myboxST}
\sbox{\myboxST}{%
\begin{tikzpicture}
\matrix [matrix of math nodes,left delimiter=(,right delimiter=)]
{\begin{tikzpicture}
[scale=.5]
\node at (0,0){$ \usebox{\myboxS}$}; 
\node at (4,0){$ \usebox{\myboxT}$}; 
\end{tikzpicture}
\\
};
\end{tikzpicture}
}

\newsavebox{\myboxLB}
\sbox{\myboxLB}{%
\begin{tikzpicture}
\matrix [matrix of math nodes,left delimiter=(,right delimiter=)]
{\begin{tikzpicture}
[scale=.5]
\node at (0,0){$ \usebox{\myboxL}$}; 
\node at (4,3){$ \usebox{\myboxB}$}; 
\end{tikzpicture}
\\
};
\end{tikzpicture}
}

\newsavebox{\myboxXBF}
\sbox{\myboxXBF}{%
\begin{tikzpicture}
\matrix [matrix of math nodes,left delimiter=(,right delimiter=)]
{\begin{tikzpicture}
[scale=.5]
\node[red] at (-1,1.5){$y_4$};
\node at (.5,0){$ \usebox{\myboxX}$}; 
\node at (2,1.5){$ \usebox{\myboxB}$};
\node at (3,1.3){$+$};
\node at (4,1.5){$ \usebox{\myboxFive}$}; 
\end{tikzpicture}
\\
};
\end{tikzpicture}
}

\newsavebox{\myboxYBF}
\sbox{\myboxYBF}{%
\begin{tikzpicture}
\matrix [matrix of math nodes,left delimiter=(,right delimiter=)]
{\begin{tikzpicture}
[scale=.5]
\node[red] at (-1.2,1.5){$y_5$};
\node at (1,0){$ \usebox{\myboxY}$}; 
\node at (3,1.5){$ \usebox{\myboxB}$};
\node at (4.2,1.3){$+$};
\node[red] at (7,1.5){$y_5y_1y_2y^2_3y_5y_6$};
\node at (10,1.5){$ \usebox{\myboxFour}$}; 
\end{tikzpicture}
\\
};
\end{tikzpicture}
}

\begin{figure}[!b]
\scalebox{.82}{\begin{tikzpicture}
	\node at (0,0){$ \usebox{\myboxV}$}; 
	\node at (2.5,0){$ \usebox{\myboxW}$};
	\node at (4,0){$=$};  
	\node at (5.5,0){$ \usebox{\myboxK}$}; 
	\node at (7.5,0){$ \usebox{\myboxB}$}; 
	\node at (8.2,0){$+$}; 
	\node[red] at (9,0){$y_5$};
	\node at (12,0){$ \usebox{\myboxST}$}; 
	
	\node at (-1,-4){$=$};  
	\node[scale=.95] at (2,-4){$ \usebox{\myboxLB}$}; 
	\node at (5,-4){$+$}; 
	\node[scale=.95] at (7.5,-4){$ \usebox{\myboxXBF}$}; 
	\node[scale=.95] at (13.5,-4){$ \usebox{\myboxYBF}$}; 
	\end{tikzpicture}
}
\caption{First line: grafting at $B$; second line: grafting at $B$, self-grafting at $4$, self-grafting at $5$, respectively.}
\label{Fig:SGLoopGen1}
\end{figure}

Therefore, in order to show that the essential loop around the boundary is in the cluster algebra, namely $L\in\cala(\calo)$, it is sufficient to show $y_4X\in\cala(\calo)$ and $y_5Y\in\cala(\calo).$  These are given in Lemma~\ref{Lem:XY}.
\end{proof}

\begin{lemma} \label{Lem:XY} With the notation in Theorem~\ref{Thm:Lgen1}, the following hold.
		\begin{enumerate}
		\item $y_4X\in\cala(\calo),$
		\item $y_5Y\in\cala(\calo).$
	\end{enumerate}
\end{lemma}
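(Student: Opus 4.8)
The plan is to establish each statement by realizing $y_4X$ and $y_5Y$ as elements obtained from cluster variables through the snake graph calculus operations (resolution and grafting) already set up in the proof of Theorem~\ref{Thm:Lgen1}. Since $X$ and $Y$ are the curves appearing in the two factors of the product $UV=L+(y_4X+x_5)(y_5Y+y_5\tilde y x_4)$, the snake graphs $G_X$ and $G_Y$ are precisely the subgraphs pictured in Figure~\ref{Fig:SGLoopGen1} (the boxes \usebox{\myboxX} and \usebox{\myboxY}). The key observation is that each of these is the snake graph of a curve that may be written, up to the coefficient prefactor $y_4$ (respectively $y_5$), as a grafting of snake graphs of honest arcs in the triangulation. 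So the strategy for part (1) is to exhibit an arc (or a short product of arcs) whose associated Laurent polynomial, after the appropriate resolution of self-crossings via \cite{CS,CS2,CS3}, has $y_4X$ as one of its summands, with every other summand already manifestly lying in $\cala(\calo)$.

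\textbf{Part (1).} First I would identify the curve $X$ geometrically in Figure~\ref{Fig:Gen1_T} and write down its snake graph $G_X$, reading off that it is a two-tile graph with tile labels $1,2$ and the gluing edge labeled as in \usebox{\myboxX}. I would then find an arc $\alpha$ in the orbifold whose snake graph $G_\alpha$ grafts $G_X$ onto a snake graph $G_\beta$ of a second arc $\beta$ at a common edge (here the edge labeled $B$, the boundary segment at the single marked point). The grafting/resolution identity of snake graph calculus then gives a relation of the form $x_\alpha x_\beta = y_4 X \cdot (\text{something in } \cala(\calo)) + (\text{summands in }\cala(\calo))$, and solving for $y_4X$ expresses it as a $\bZ_{\ge 0}$-combination of products and quotients of cluster variables. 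Because the grafting takes place at the boundary edge $B$ incident to the unique marked point, the relevant $x_B$ specializes to a coefficient-free boundary variable, so no invertibility of coefficients is needed — this is exactly the feature emphasized after Theorem~\ref{Thm:LgenArb}. I would verify the matching count: the perfect matchings of the grafted graph split as matchings of $G_X$ times matchings of $G_\beta$ plus a crossing term, and the crossing term is the piece that must be checked to lie in $\cala(\calo)$ by induction or by direct recognition as a cluster monomial.

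\textbf{Part (2).} The argument for $y_5Y$ is structurally identical: $Y$ has snake graph \usebox{\myboxY}, a longer zig-zag with tiles $6,3,2,1,3$, and I would self-graft $G_Y$ at the boundary edge $B$ using the same resolution identity, again producing $y_5Y$ as a summand of a product of cluster variables together with terms already in $\cala(\calo)$. The coefficient prefactor in the second display of Figure~\ref{Fig:SGLoopGen1}, namely $y_5y_1y_2y_3^2y_5y_6 = y_5\tilde y$, must be tracked carefully so that the $y$-weights of the grafted matchings agree with those claimed in the expansion $UV=L+(y_4X+x_5)(y_5Y+y_5\tilde y x_4)$; this bookkeeping of principal-coefficient $y$-variables through the grafting is where the snake graph calculus of \cite{CS,CS2,CS3} does the essential work of lifting the trivial-coefficient identities of Figure~\ref{Fig:SGLoopGen1} to principal coefficients.

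The main obstacle I expect is not the geometric identification of the curves $X$ and $Y$, which is forced by the figures, but the precise matching of coefficients: verifying that when one grafts (or self-grafts) at $B$, every matching of the resulting graph carries exactly the $y$-weight needed so that the factor peeled off is literally $y_4X$ (resp.\ $y_5Y$) and not $y_4X$ times an extra monomial, and dually that the complementary summands are genuine elements of $\cala(\calo)$ rather than mere Laurent polynomials. Getting the specialization of variables right — both the collapse of the unfolding variables for the orbifold points and the specialization of the boundary edge $B$ — is the delicate step, and it is precisely here that avoiding the invertibility assumption of \cite{CLS} requires care in how the boundary-edge contribution at the single marked point is handled.
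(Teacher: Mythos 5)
Your proposal takes essentially the same route as the paper: the paper's proof likewise starts from the product of the Laurent polynomials of two explicitly chosen crossing arcs ($W,W'$ for $y_4X$ and $Z,Z'$ for $y_5Y$), applies a short chain of snake-graph-calculus smoothings and graftings whose first step is a grafting at the boundary edge $B$, and isolates $y_4X$ (resp.\ $y_5Y$) as the summand attached to the weight-one boundary edge, all other summands being coefficient monomials times Laurent polynomials of arcs and hence visibly in $\cala(\calo)$ (see Figures~\ref{Fig:y4X}, \ref{Fig:SGy4X}, \ref{Fig:SGy5Y} and \ref{Fig:y5Y}). The only difference is explicitness: where you write ``I would then find an arc $\alpha$\dots'', the paper exhibits the concrete arcs and the full three-step chain of identities (crossing smoothing, self-crossing smoothing, then grafting with an initial arc plus a self-grafting), so your plan is correct and is completed exactly by those figures.
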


\begin{proof} The proof is given by successively applying snake graph calculus to some chosen crossing arcs. For simplicity, we explicitly indicate each step in Figure~\ref{Fig:y4X} where we consider the triangulation  of the orbifold with one orbifold point. The general case is given in exactly the same steps by considering the triangulation of the orbifold as in Figure~\ref{Fig:Gen1multOrb} and the identities in terms of snake graphs carried out in a similar way as discussed in the proof of Theorem~\ref*{Thm:Lgen1}. 
	

\begin{figure} [!t]
	\includegraphics[width=1.05\textwidth]{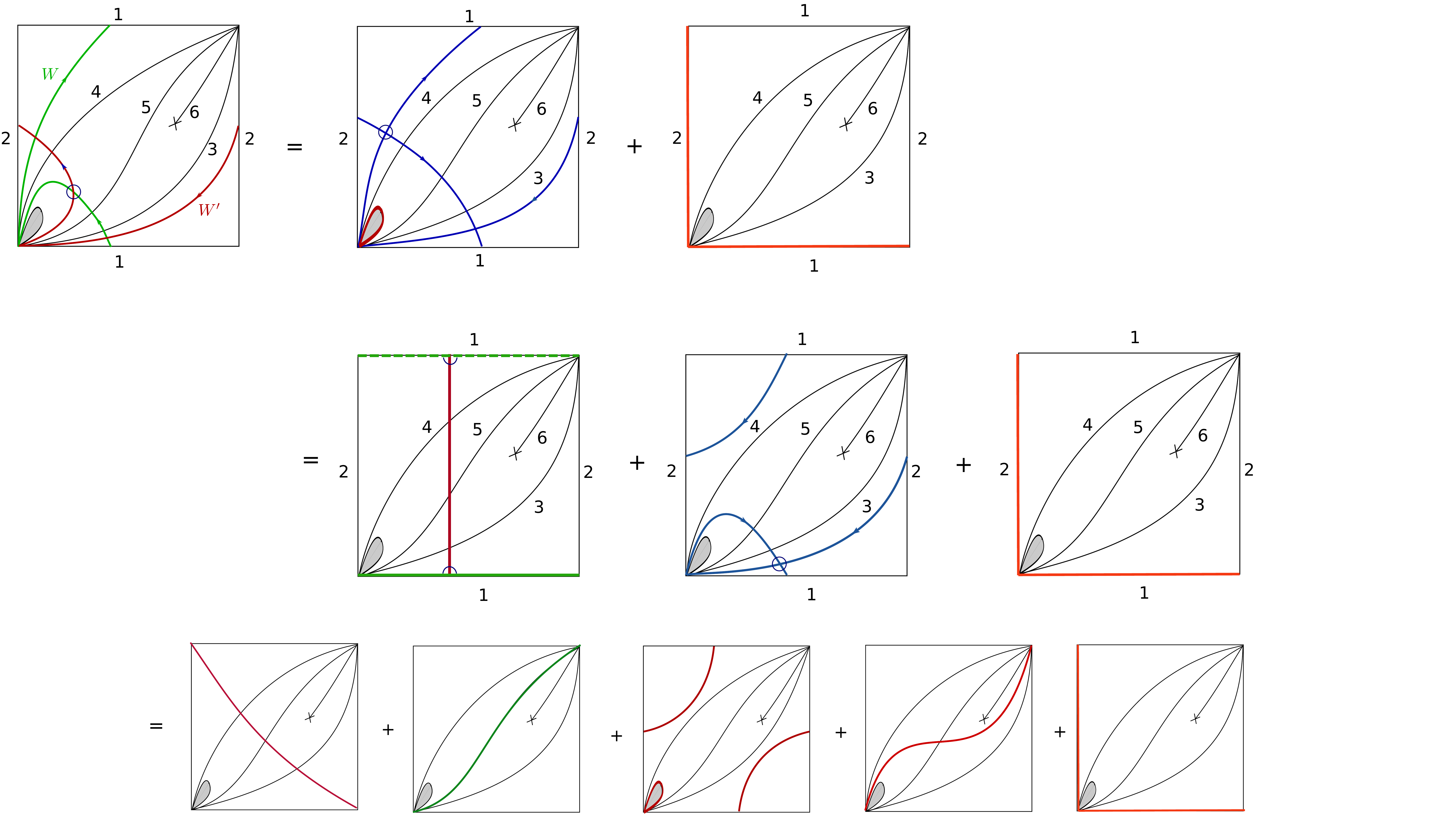}
	\psfrag{W}{$W$}
	\psfrag{W'}{$W'$}
	\caption{Geometric realization of the identities given in Figure~\ref{Fig:SGy4X} in terms of arcs in the surface. Step 1: smoothing of the crossing of the arcs $W$ (green) and $W'$ (red); step 2: smoothing of a self-crossing; step 3: grafting with $1$, smoothing of a self-crossing.}
	\label{Fig:y4X}
\end{figure}

The identities in the cluster algebra with principal coefficients corresponding to smoothings of crossings of the arcs in Figure~\ref{Fig:y4X} are indicated in Figure~\ref{Fig:SGy4X}. This proves $y_4X\in\cala(\calo)$.

\newsavebox{\myboxC}
\sbox{\myboxC}{%
	\begin{tikzpicture}[scale=.5]
	\draw (0,0)--(1,0)--(1,2)--(0,2)--(0,0) (0,1)--(2,1)--(2,2)--(1,2);
	\node at (.5,.5){$1$};
	\node at (.5,1.5){$3$};
	\node at (1.5,1.5){$5$};
	\node at (2,1.5)[right, scale=.6]{$B$};
	\end{tikzpicture}
}  

\newsavebox{\myboxD}
\sbox{\myboxD}{%
	\begin{tikzpicture}[scale=.5]
	\draw (0,0)--(2,0)--(2,1)--(0,1)--(0,0) (1,0)--(1,1);
	\node at (.5,.5){$4$};
	\node at (1.5,.5){$2$};
	\node at (0,.5)[left, scale=.6]{$B$};
	\end{tikzpicture}
}  

\newsavebox{\myboxE}
\sbox{\myboxE}{%
	\begin{tikzpicture}[scale=.5]
	\draw (0,0)--(1,0)--(1,2)--(0,2)--(0,0) (0,1)--(2,1)--(2,2)--(1,2) (2,1)--(4,1)--(4,2)--(2,2) (3,1)--(3,2);
	\node at (.5,.5){$1$};
	\node at (.5,1.5){$3$};
	\node at (1.5,1.5){$5$};
	\node at (2.5,1.5){$4$};
	\node at (3.5,1.5){$2$};
	\node at (.5,0)[below, scale=.6]{$2$};
	\node at (2.5,2)[above, scale=.6]{$2$};
	\end{tikzpicture}
} 

\newsavebox{\myboxF}
\sbox{\myboxF}{%
	\begin{tikzpicture}[scale=.5]
	\draw (0,0)--(1,0)--(1,2)--(0,2)--(0,0) (0,1)--(2,1)--(2,2)--(1,2) (2,1)--(3,1)--(3,2)--(2,2);
	\node at (.5,.5){$1$};
	\node at (.5,1.5){$3$};
	\node at (1.5,1.5){$5$};
	\node at (2.5,1.5){$4$};
	\node at (0,0){$\bullet$};
	\node at (1,0){$\bullet$};
	\node at (2,2){$\bullet$};
	\node at (3,2){$\bullet$};
	\end{tikzpicture}
}

\newsavebox{\myboxG}
\sbox{\myboxG}{%
	\begin{tikzpicture}[scale=.5]
	\draw (0,0)--(2,0)--(2,1)--(0,1)--(0,0) (1,0)--(1,2)--(3,2)--(3,1)--(2,1)--(2,2);
	\node at (.5,.5){$5$};
	\node at (1.5,.5){$3$};
	\node at (1.5,1.5){$1$};
	\node at (2.5,1.5){$2$};
	\node at (0,.5)[left, scale=.6]{$B$};
	\end{tikzpicture}
}

\newsavebox{\myboxH}
\sbox{\myboxH}{%
	\begin{tikzpicture}[scale=.5]
	\draw (0,0)--(3,0)--(3,1)--(0,1)--(0,0) (1,0)--(1,1) (2,0)--(2,1);
	\node at (.5,.5){$3$};
	\node at (1.5,.5){$5$};
	\node at (2.5,.5){$4$};
	\end{tikzpicture}
} 

\newsavebox{\myboxTF}
\sbox{\myboxTF}{%
\begin{tikzpicture}[scale=.5]
\draw (0,0)--(1,0)--(1,1)--(0,1)--(0,0);
\node at (.5,.5){$5$};
\end{tikzpicture}	
}

\newsavebox{\myboxXX}
\sbox{\myboxXX}{%
	\begin{tikzpicture}[scale=.5]
	\draw (0,0)--(1,0)--(1,0)--(1,2)--(0,2)--(0,0) (0,1)--(1,1);
	\node at (.5,.5){$1$};
	\node at (.5,1.5){$2$};
	\node at (0,0){$\bullet$};     
	\node at (1,0){$\bullet$};
	\node at (0,2){$\bullet$};     
	\node at (1,2){$\bullet$};
	\end{tikzpicture}
}

\newsavebox{\myboxBB}
\sbox{\myboxBB}{%
	\begin{tikzpicture}[scale=.5]
	\draw (0,0)--(1,0);
	\node[scale=.6] at (.5,.5){$B$};
	\end{tikzpicture}
}

\newsavebox{\myboxXBT}
\sbox{\myboxXBT}{%
	\begin{tikzpicture}
	\matrix [matrix of math nodes,left delimiter=(,right delimiter=)]
	{\begin{tikzpicture}
		[scale=.5]
		\node at (.5,0){$ \usebox{\myboxXX}$}; 
		\node at (2,1.5){$ \usebox{\myboxBB}$};
		\node at (3,1.3){$+$};
		\node[red] at (4.5,1.3){$y_1y_2y_3$};
		\node at (6.4,1){$ \usebox{\myboxTF}$}; 
		\end{tikzpicture}
		\\
	};
\end{tikzpicture}
}

\begin{figure}[h]
	\scalebox{.82}{\begin{tikzpicture}
		\node at (0,0){$ \usebox{\myboxC}$}; 
		\node at (1.7,.25){$ \usebox{\myboxD}$};
		\node at (3,0){$=$};  
		\node at (4.5,0){$ \usebox{\myboxE}$}; 
		\node at (6,0){$+$}; 
		\node[red] at (7,0){$y_1y_3y_5$};
		\node at (8,.2){$\begin{tikzpicture}[scale=.5]
			\draw (0,0)--(1,0);	
			\node at (.5,0)[above, scale=.6]{$2$};
			\end{tikzpicture}$}; 
		\node at (8.7,.2){$\begin{tikzpicture}[scale=.5]
			\draw (0,0)--(1,0);	
			\node at (.5,0)[above, scale=.6]{$1$};
			\end{tikzpicture}$}; 
		
		\node at (3,-2){$=$};  
		\node at (4.2,-2){$\usebox{\myboxF}$}; 
		\node at (5.6,-2){$\begin{tikzpicture}[scale=.5]
				\draw (0,0)--(1,0);	
				\node at (.5,0)[above, scale=.6]{$1$};
				\end{tikzpicture}$}; 
		\node at (6.5,-2){$+$}; 
		\node[red] at (7,-2){$y_4$};
		\node at (8.2,-2){$\usebox{\myboxG}$};
		\node at (9.5,-2){$+$}; 
		\node[red] at (10.5,-2){$y_1y_3y_5$};
		\node at (11.5,-1.8){$\begin{tikzpicture}[scale=.5]
			\draw (0,0)--(1,0);	
			\node at (.5,0)[above, scale=.6]{$2$};
			\end{tikzpicture}$}; 
		\node at (12.2,-1.8){$\begin{tikzpicture}[scale=.5]
			\draw (0,0)--(1,0);	
			\node at (.5,0)[above, scale=.6]{$1$};
			\end{tikzpicture}$};

		\node at (3,-4){$=$};
		\node[red] at (3.5,-4){$y_1$};
		\node at (4.6,-4){$\usebox{\myboxH}$};   
		\node at (5.6,-4){$+$};
		\node at (6.3,-4){$\begin{tikzpicture}[scale=.5]
				\draw (0,0)--(1,0);	
				\node at (.5,0)[above, scale=.6]{$5$};
				\end{tikzpicture}$};
		\node at (7,-4){$+$}; 
		\node[red] at (7.5,-4){$y_4$};
		\node at (10.4,-4){$\usebox{\myboxXBT}$};
		\node at (13.2,-4){$+$}; 
		\node[red] at (14,-4){$y_1y_3y_5$};  
		\node at (15,-4){$\begin{tikzpicture}[scale=.5]
				\draw (0,0)--(1,0);	
				\node at (.5,0)[above, scale=.6]{$2$};
				\end{tikzpicture}$}; 
		\node at (15.7,-4){$\begin{tikzpicture}[scale=.5]
				\draw (0,0)--(1,0);	
				\node at (.5,0)[above, scale=.6]{$1$};
				\end{tikzpicture}$};

	\end{tikzpicture}
	}
	\caption{First line: grafting at $B$; second line: smoothing of a self-crossing; third line: grafting with $1$ and a self-grafting, respectively.}
	\label{Fig:SGy4X}
\end{figure}

Similarly, we show $y_5Y\in\cala(\calo)$ in Figure~\ref{Fig:y5Y} when the surface has one orbifold point and the geometric realization of these identities are given in Figure~\ref{Fig:SGy5Y}.

\begin{figure} [ht]
	\includegraphics[width=1.01\textwidth]{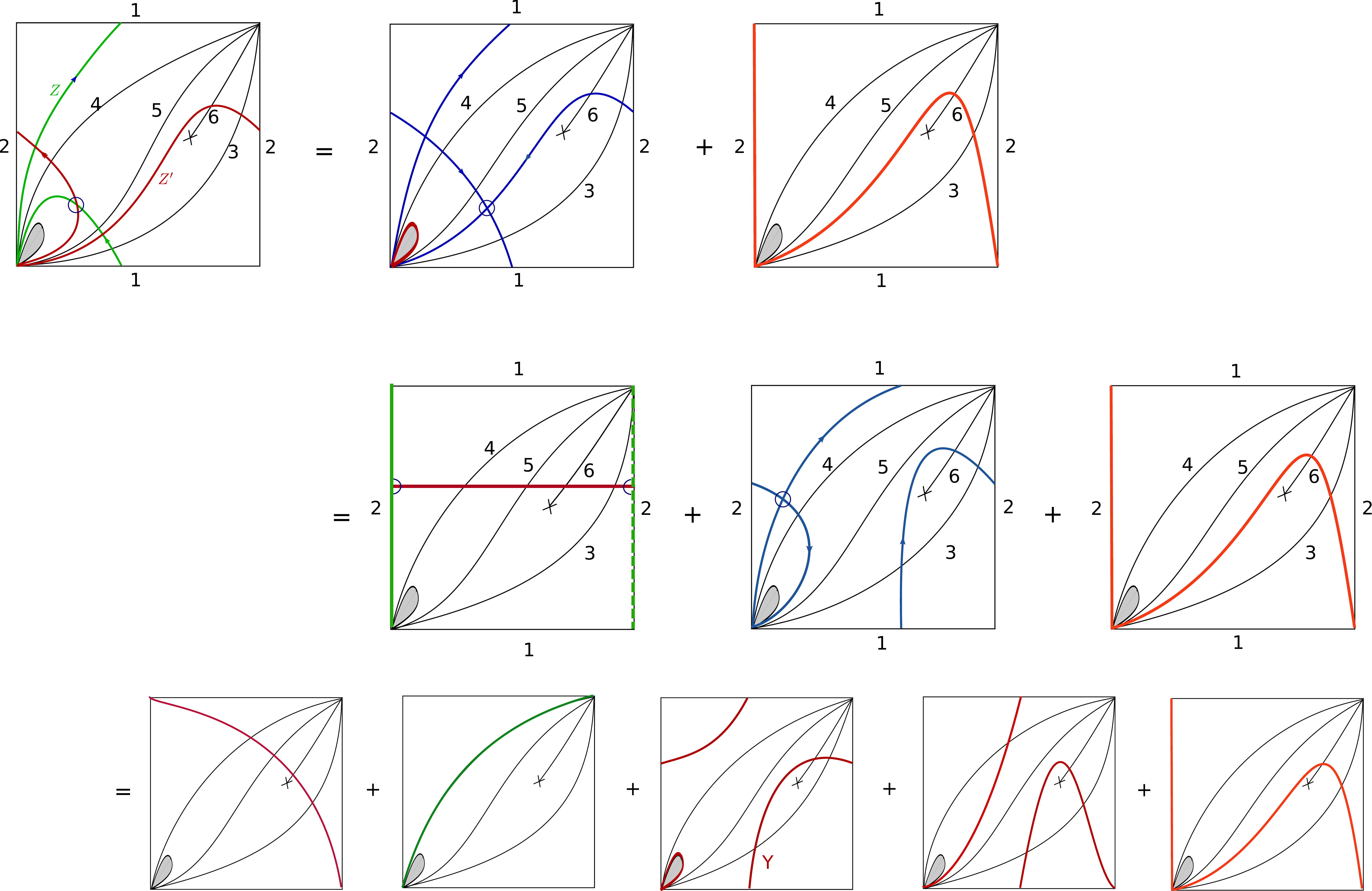}
	\psfrag{Z}{$Z$}
	\psfrag{Z'}{$Z'$}
	\psfrag{Y}{$Y$}
	\caption{Geometric realization of the identities given in Figure~\ref{Fig:y5Y} in terms of arcs in the surface. Step 1: smoothing of the crossing of the arcs $Z$ (green) and $Z'$ (red); step 2: smoothing of a self-crossing; step 3: grafting with the arc $2$ and smoothing of a self-crossing.}
	\label{Fig:SGy5Y}
\end{figure}

\newsavebox{\myboxM}
\sbox{\myboxM}{%
	\begin{tikzpicture}[scale=.5]
	\draw (0,0)--(3,0)--(3,2)--(2,2)--(2,0) (0,0)--(0,1)--(3,1) (1,0)--(1,1);
	\node at (.5,.5){$4$};
	\node at (1.5,.5){$2$};
	\node at (2.5,.5){$3$};
	\node at (2.5,1.5){$6$};
	\node at (0,.5)[left, scale=.6]{$B$};
	\end{tikzpicture}
}

\newsavebox{\myboxN}
\sbox{\myboxN}{%
	\begin{tikzpicture}[scale=.5]
	\draw (0,0)--(1,0)--(1,2)--(0,2)--(0,0) (0,1)--(2,1)--(2,2)--(1,2) (2,1)--(4,1)--(4,2)--(2,2) (3,1)--(3,2)
	(4,1)--(5,1)--(5,3)--(4,3)--(4,2)--(5,2);
	\node at (.5,.5){$1$};
	\node at (.5,1.5){$3$};
	\node at (1.5,1.5){$5$};
	\node at (2.5,1.5){$4$};
	\node at (3.5,1.5){$2$};
	\node at (4.5,1.5){$3$};
	\node at (4.5,2.5){$6$};
	\end{tikzpicture}
}

\newsavebox{\myboxI}
\sbox{\myboxI}{%
	\begin{tikzpicture}[scale=.5]
	\draw (0,0)--(3,0)--(3,1)--(0,1)--(0,0) (1,0)--(1,1) (2,0)--(2,2)--(3,2)--(3,1)--(2,1)--(2,2) (3,1)--(4,1)--(4,2)--(3,2);
	\node at (.5,.5){$2$};
	\node at (1.5,.5){$4$};
	\node at (2.5,.5){$5$};
	\node at (2.5,1.5){$6$};
	\node at (3.5,1.5){$3$};
	\node at (0,0){$\bullet$};
	\node at (0,1){$\bullet$};
	\node at (3,2){$\bullet$};
	\node at (4,2){$\bullet$};
	\end{tikzpicture}
} 

\newsavebox{\myboxQ}
\sbox{\myboxQ}{%
	\begin{tikzpicture}[scale=.5]
	\draw (0,0)--(4,0)--(4,1)--(0,1)--(0,0) (1,0)--(1,1) (2,0)--(2,1) (3,0)--(3,3)--(4,3)--(4,1) (3,2)--(4,2);
	\node at (.5,.5){$1$};
	\node at (1.5,.5){$3$};
	\node at (2.5,.5){$6$};
	\node at (3.5,.5){$3$};
	\node at (3.5,1.5){$2$};
	\node at (3.5,2.5){$4$};
	\end{tikzpicture}
}

\newsavebox{\myboxR}
\sbox{\myboxR}{%
	\begin{tikzpicture}[scale=.5]
	\draw (1,0)--(3,0)--(3,1)--(1,1)--(1,0) (2,0)--(2,2)--(3,2)--(3,1)--(2,1)--(2,2) (3,1)--(4,1)--(4,2)--(3,2);
	\node at (1.5,.5){$4$};
	\node at (2.5,.5){$5$};
	\node at (2.5,1.5){$6$};
	\node at (3.5,1.5){$3$};
	\end{tikzpicture}
}

\newsavebox{\myboxYY}
\sbox{\myboxYY}{%
	\begin{tikzpicture}[scale=.5]
	\draw (1,0)--(2,0)--(2,2)--(1,2)--(1,0) (1,1)--(3,1)--(3,2)--(2,2) (2,2)--(2,3)--(4,3)--(4,2)--(3,2)--(3,3);
	\node at (1.5,.5){$6$};
	\node at (1.5,1.5){$3$};
	\node at (2.5,1.5){$2$};
	\node at (2.5,2.5){$1$};
	\node at (3.5,2.5){$3$}; 
	\node at (1,0){$\bullet$};     
	\node at (2,0){$\bullet$};
	\node at (4,3){$\bullet$};     
	\node at (4,2){$\bullet$};
	\end{tikzpicture}
}

\newsavebox{\myboxP}
\sbox{\myboxP}{%
	\begin{tikzpicture}[scale=.5]
	\draw (0,0)--(5,0)--(5,1)--(0,1)--(0,0) (1,0)--(1,1) (2,0)--(2,1) (3,0)--(3,1) (4,0)--(4,1);
	\node at (.5,.5){$4$};
	\node at (1.5,.5){$1$};
	\node at (2.5,.5){$3$};
	\node at (3.5,.5){$6$};
	\node at (4.5,.5){$3$};
	\end{tikzpicture}
}

\newsavebox{\myboxYBT}
\sbox{\myboxXBT}{%
	\begin{tikzpicture}
	\matrix [matrix of math nodes,left delimiter=(,right delimiter=)]
	{\begin{tikzpicture}
		[scale=.5]
		\node at (0,0){$ \usebox{\myboxYY}$}; 
		\node at (2.3,1.5){$ \usebox{\myboxBB}$};
		\node at (3.3,1.3){$+$};
		\node[red] at (4.3,1.3){$y_2$};
		\node at (7.5,1){$ \usebox{\myboxP}$}; 
		\end{tikzpicture}
		\\
	};
\end{tikzpicture}
}

\begin{figure}[H]
	\scalebox{.82}{\begin{tikzpicture}
		\node at (0,0){$ \usebox{\myboxC}$}; 
		\node at (1.7,.25){$ \usebox{\myboxM}$};
		\node at (3,0){$=$};  
		\node at (4.8,0){$ \usebox{\myboxN}$}; 
		\node at (6.3,0){$+$}; 
		\node[red] at (7.3,0){$y_1y_3y_5$};
		\node at (8.3,.2){$\begin{tikzpicture}[scale=.5]
			\draw (0,0)--(1,0);	
			\node at (.5,0)[above, scale=.6]{$2$};
			\end{tikzpicture}$}; 
		\node at (9,.2){$\begin{tikzpicture}[scale=.5]
			\draw (0,0)--(1,0)--(1,2)--(0,2)--(0,0) (0,1)--(1,1);	
			\node at (.5,.5){$3$};
			\node at (.5,1.5){$6$};
			\end{tikzpicture}$}; 
		
		\node at (3,-2){$=$};  
		\node at (4.5,-2){$\usebox{\myboxI}$}; 
		\node at (5.9,-2){$\begin{tikzpicture}[scale=.5]
			\draw (0,0)--(1,0);	
			\node at (.5,0)[above, scale=.6]{$2$};
			\end{tikzpicture}$}; 
		\node at (6.6,-2){$+$}; 
		\node[red] at (7.2,-2){$y_5$};
		\node at (8.6,-2){$\usebox{\myboxQ}$};
		\node at (10,-2){$+$}; 
		\node[red] at (10.9,-2){$y_1y_3y_5$};
		\node at (11.8,-1.8){$\begin{tikzpicture}[scale=.5]
			\draw (0,0)--(1,0);	
			\node at (.5,0)[above, scale=.6]{$2$};
			\end{tikzpicture}$}; 
		\node at (12.5,-1.8){$\begin{tikzpicture}[scale=.5]
			\draw (0,0)--(1,0)--(1,2)--(0,2)--(0,0) (0,1)--(1,1);	
			\node at (.5,.5){$3$};
			\node at (.5,1.5){$6$};
			\end{tikzpicture}$};

		\node at (3,-5){$=$};
		\node[red] at (3.5,-5){$y_2$};
		\node at (4.6,-5){$\usebox{\myboxR}$};   
		\node at (5.6,-5){$+$};
		\node at (6.3,-5){$\begin{tikzpicture}[scale=.5]
			\draw (0,0)--(1,0);	
			\node at (.5,0)[above, scale=.6]{$4$};
			\end{tikzpicture}$};
		\node at (7,-5){$+$}; 
		\node[red] at (7.5,-5){$y_5$};
		\node at (11.5,-5){$\usebox{\myboxXBT}$};

		\node at (11.2,-7){$+$}; 
		\node[red] at (12,-7){$y_1y_3y_5$};  
		\node at (13,-7){$\begin{tikzpicture}[scale=.5]
			\draw (0,0)--(1,0);	
			\node at (.5,0)[above, scale=.6]{$2$};
			\end{tikzpicture}$}; 
		\node at (13.7,-7){$\begin{tikzpicture}[scale=.5]
			\draw (0,0)--(1,0)--(1,2)--(0,2)--(0,0) (0,1)--(1,1);	
			\node at (.5,.5){$3$};
			\node at (.5,1.5){$6$};
			\end{tikzpicture}$};
		\node at (0,-8){$\quad$};
		\end{tikzpicture}
	}
	\caption{First line: grafting at $B$; second line: self-grafting; third line: grafting with $2$ and self-grafting, respectively.}
	\label{Fig:y5Y}
\end{figure}
\end{proof}
\section{Arbitrary genus case}
\label{Sec:ArbitraryGenus}

In this section, we show the Laurent polynomial associated to the essential loop around the boundary belongs to the cluster algebra with principal coefficients for orbifolds of arbitrary genus and with exactly one marked point on their boundary.

Throughout this section, we work with the triangulation given in Figure~\ref{Fig:HigherGenus} and proceed as in Section~\ref{Sec:Genus1}. The arcs $U$ and $V$ and the resolution of the crossing of these arcs are indicated in Figure~\ref{Fig:HigherGenUV}. 

\begin{figure} [ht]
	\includegraphics[width=.65\textwidth]{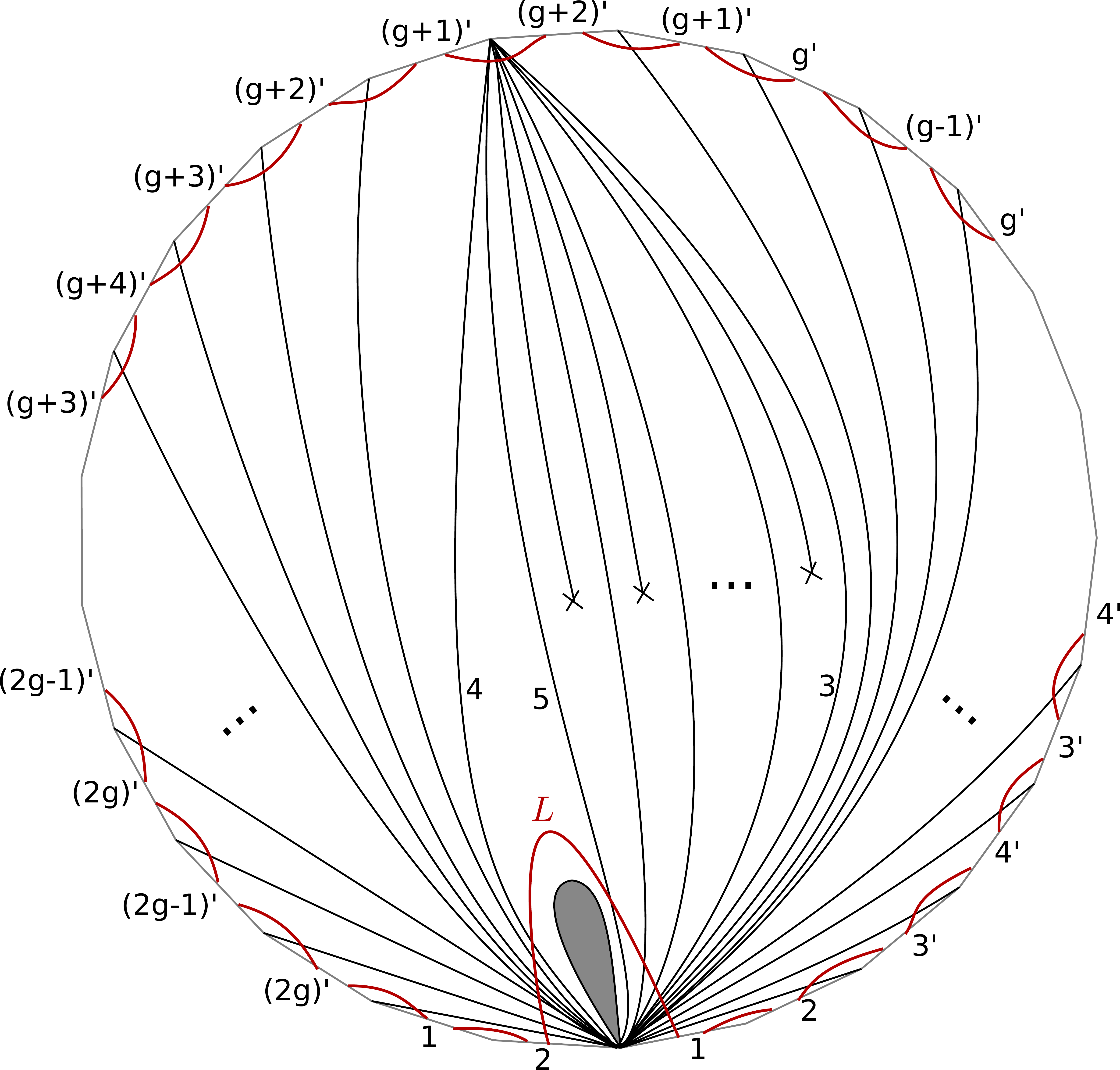}
	\psfrag{L}{$L$}
	\caption{Initial triangulation for higher genus and the essential loop around the boundary $L$. }
	\label{Fig:HigherGenus}
\end{figure}

\begin{figure} [ht]
	\includegraphics[width=.65\textwidth]{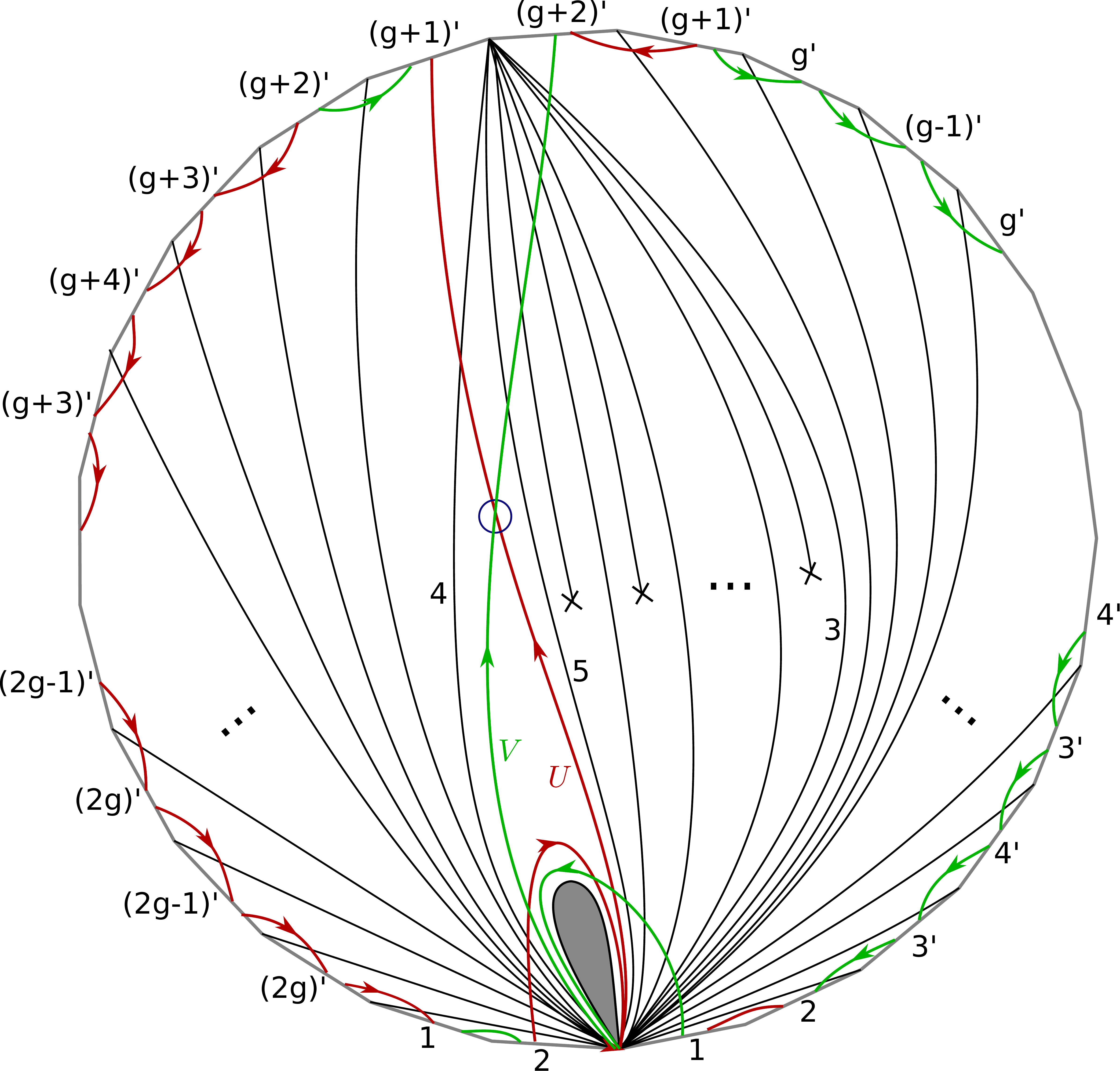}
\caption{The arcs $U$ and $V$ for higher genus.}
\label{Fig:HigherGenUV}
\end{figure}

\begin{figure} [ht]
	\includegraphics[width=1.01\textwidth]{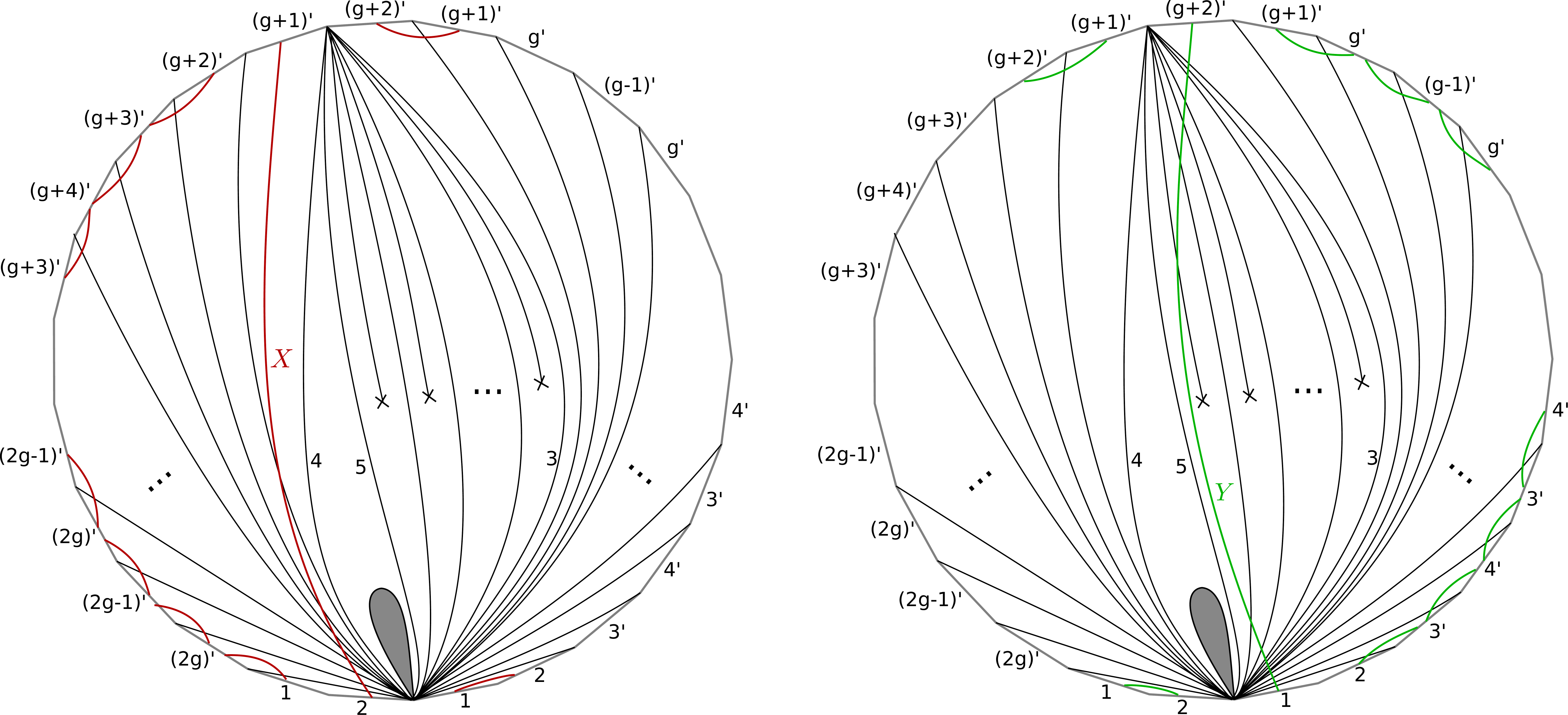}
	\caption{The closed curves $X$ and $Y$ for higher genus.}
	\label{Fig:HigherGenXY}
\end{figure}

\begin{theorem}\label{Thm:LgenArb} Let $\calo$ be an orbifold of arbitrary genus with exactly one boundary marked point, and let $\cala(\calo)$ be its associated cluster algebra with principal coefficients.
	The Laurent polynomial associated to the essential loop around the boundary belongs to the cluster algebra $\cala(\calo).$
\end{theorem}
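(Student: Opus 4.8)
The plan is to follow the proof of Theorem~\ref{Thm:Lgen1} essentially verbatim, with the genus-$1$ triangulation of Figure~\ref{Fig:Gen1multOrb} replaced by the higher-genus triangulation fixed in Figure~\ref{Fig:HigherGenus}. First I would resolve the unique crossing of the arcs $U$ and $V$ of Figure~\ref{Fig:HigherGenUV} by smoothing, and read off the resulting relation from the associated snake and band graphs. Exactly as in the genus-$1$ computation this produces an identity of the same shape, namely
$$UV = L + (y_a X + x_b)(y_c Y + y_c\,\tilde y\, x_d)$$
for suitable indices $a,b,c,d$ determined by the triangulation, where $X$ and $Y$ are the closed curves of Figure~\ref{Fig:HigherGenXY} and $\tilde y$ is the product of the $y$-variables over the tiles of the smoothed curve. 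Since $U$ and $V$ are arcs, $UV\in\cala(\calo)$, and $x_b,x_d,y_a,y_c,\tilde y\in\cala(\calo)$ because the coefficients are principal. Expanding the product and collecting terms, one sees that $L\in\cala(\calo)$ follows as soon as the two monomial multiples $y_a X$ and $y_c Y$ lie in $\cala(\calo)$, which is precisely the higher-genus analogue of Lemma~\ref{Lem:XY}.

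The substance of the proof is therefore this analogue of Lemma~\ref{Lem:XY}: that the relevant monomial multiples of the closed curves $X$ and $Y$ belong to $\cala(\calo)$. As in Section~\ref{Sec:Genus1} I would establish this by successively applying snake graph calculus---grafting at the boundary segment $B$, smoothing self-crossings, and self-grafting---so as to rewrite the band graph of the monomial multiple of $X$, respectively $Y$, as an explicit non-negative combination of snake graphs of \emph{genuine arcs}, whose Laurent polynomials lie in $\cala(\calo)$ by the expansion formula of Section~\ref{Sec:SnakeGraphs}. The decisive structural feature, already exploited in the proof of Theorem~\ref{Thm:Lgen1}, is that enlarging the surface only inserts additional zig-zag blocks into these graphs: the snake graph of $U$ is unchanged while that of $V$ merely acquires a longer zig-zag piece. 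Accordingly, each extra handle of the higher-genus surface contributes one further zig-zag block to the snake and band graphs of $U$, $V$, $X$, $Y$, and the grafting/smoothing/self-grafting identities of the genus-$1$ computation carry over once every elementary zig-zag subgraph is replaced by the correspondingly longer one.

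I expect the main obstacle to be organizing this bookkeeping uniformly in the genus $g$: one must verify that the sequence of snake graph calculus moves used in Lemma~\ref{Lem:XY} \emph{localizes} to a single handle block at each step, so that the full reduction proceeds by induction on the number of handles, each inductive step reproducing exactly one pass of the genus-$1$ argument with no interference between consecutive blocks. Concretely, the self-crossing smoothings and the graftings must be shown to act only on the boundary-most zig-zag block, leaving the remaining blocks untouched, so that the terminal graphs are again snake graphs of arcs. Once this localization is checked, their Laurent polynomials lie in $\cala(\calo)$, hence $y_a X,y_c Y\in\cala(\calo)$ and therefore $L\in\cala(\calo)$. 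As in the genus-$1$ case, a final consistency check tracks the principal-coefficient monomials $y_i$ through every grafting to confirm they agree on both sides; crucially, no coefficient is ever inverted, so the argument remains valid for non-invertible coefficients.
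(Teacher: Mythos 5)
Your proposal matches the paper's proof essentially step for step: the paper also fixes the higher-genus triangulation of Figure~\ref{Fig:HigherGenus}, chooses $U$ and $V$ so that they cross in the triangle bounded by the arcs $4$, $5$ and the boundary (so the indices are exactly $a=d=4$, $b=c=5$), derives the identity $UV=L+(y_4X+x_5)(y_5Y+y_5\tilde{y}x_4)$, and reduces to the higher-genus analogue of Lemma~\ref{Lem:XY} (Lemma~\ref{Lem:XYArb}), proved by repeating the genus-one snake graph calculus with longer zig-zag pieces. Your additional remarks on localizing the moves to a single zig-zag block are a more careful articulation of the bookkeeping that the paper leaves implicit, not a different argument.
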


\begin{proof} The initial triangulation (Figure~\ref{Fig:HigherGenus}) and the arcs $U$ and $V$ (Figure~\ref{Fig:HigherGenUV}) are chosen in a way that the identities in terms of snake graph calculus corresponding to smoothings of the crossings of $U$ and $V$ are locally similar to the genus one case. We also choose labels for the initial triangulation to reflect this occurrence. Indeed, the arcs $U$ and $V$ cross in the triangle bounded by the arcs $4$, $5$ and the boundary arc as in the genus one case. The arcs $X$ and $Y$ obtained in the process of smoothing the crossing between $U$ and $V$ are given in Figure~\ref{Fig:HigherGenXY} (compare with Figure~\ref{Fig:Gen1_T}). Snake graphs associated to the arcs $U, V, X, Y$ and $L$ will have longer zig-zag subgraphs corresponding to the crossings of these arcs with the triangulation of the surface; however, this will again give rise to an identity in the cluster algebra parametrized by the arcs $U, V, X, Y$ and $L$. Namely, 
$$UV=L + (y_4X + x_5) (y_5Y+ y_5\tilde{y}x_4)$$
where $\tilde y= \displaystyle\prod_{i\mbox{ tile in } G_{Q}} y_i$ and the curve $Q$ is obtained by smoothing $U$ and $V$ as in genus $1$ case.
The result then follows by Lemma~\ref{Lem:XYArb} below.
\end{proof}

\begin{lemma} 
\label{Lem:XYArb}
With the notation of Theorem~\ref{Thm:LgenArb}, the following hold.
	\begin{enumerate}
		\item $y_4X\in\cala(\calo),$
		\item $y_5Y\in\cala(\calo).$
	\end{enumerate}
\end{lemma}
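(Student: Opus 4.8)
The plan is to reproduce the three-step arguments of Lemma~\ref{Lem:XY} verbatim in structure, now applied to the curves $X$ and $Y$ of Figure~\ref{Fig:HigherGenXY} inside the higher-genus triangulation of Figure~\ref{Fig:HigherGenus}. For part (1), I would choose the auxiliary arcs $W$ and $W'$ (the higher-genus analogues of those in Figure~\ref{Fig:y4X}) whose single crossing sits in the same local triangle---bounded by the arcs $4$, $5$ and the boundary---as in genus $1$. Resolving this crossing via snake graph calculus \cite{CS,CS2,CS3} realizes the product $WW'$ as a grafting at the boundary edge $B$, producing the snake graph of $X$ together with a correction term. A smoothing of a self-crossing, followed by a grafting with the arc $1$ and a final self-grafting, should then isolate $y_4 X$ as a $\bZ[y_i]$-linear combination of Laurent polynomials of honest arcs, each of which is a cluster variable and hence lies in $\cala(\calo)$. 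Part (2) runs identically with auxiliary arcs $Z, Z'$, a self-grafting in place of the self-crossing smoothing, and a closing grafting with the arc $2$, mirroring Figures~\ref{Fig:SGy5Y} and~\ref{Fig:y5Y}.

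The observation that collapses the higher-genus case onto the genus-$1$ template is precisely the one already used in the proof of Theorem~\ref{Thm:Lgen1}: the snake graphs of $X$, $Y$ and of every auxiliary curve differ from their genus-$1$ counterparts only by a lengthened zig-zag subgraph recording the extra crossings with the triangulation arcs $7, 8, \dots, n$. Because resolution of crossings and (self-)grafting are \emph{local} operations---each acting only on the overlapping portion of the relevant snake graphs and leaving the complement untouched---the extra zig-zag tiles are carried unchanged through every step. Concretely, one replaces each short zig-zag block $5,6,3$ by $5,6,7,\dots,n,3$ throughout Figures~\ref{Fig:SGy4X} and~\ref{Fig:y5Y}, and the identical string of identities goes through, with the accumulated coefficient now reading $\tilde y = \prod_{i \text{ tile in } G_Q} y_i$ over the longer graph.

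The main obstacle I anticipate is purely a matter of bookkeeping: confirming that none of the crossings or grafting edges invoked in the three steps falls \emph{inside} the lengthened zig-zag region. This must be read off Figure~\ref{Fig:HigherGenus}, by verifying that $X$, $Y$, $W$, $W'$, $Z$, $Z'$ all meet the triangulation in their genus-$1$ pattern near the triangle of arcs $4$, $5$ and the boundary, with all additional complexity confined to the strands threading through the handles. Once this locality is established, the coefficient monomials $y_i$ attached to each resolved term are fixed by the weights of the diagonals crossed and are tracked exactly as in the genus-$1$ figures. Since every resolution and grafting yields an honest sum of snake-graph weights, $y_4 X$ and $y_5 Y$ emerge as explicit polynomial combinations of cluster variables, with no inversion of coefficients required---consistent with the paper's claim of working over non-invertible coefficients.
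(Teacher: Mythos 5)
Your proposal is correct and takes essentially the same route as the paper: the paper's own proof of Lemma~\ref{Lem:XYArb} consists of the single statement that, with the triangulation of Figure~\ref{Fig:HigherGenus}, the claim follows by repeating the steps of Lemma~\ref{Lem:XY}. The justification you supply---locality of resolutions and (self-)graftings, and the replacement of each zig-zag block $5,6,3$ by $5,6,7,\dots,n,3$---is precisely the reasoning the paper invokes in the proofs of Theorems~\ref{Thm:Lgen1} and~\ref{Thm:LgenArb} to explain why the genus-$1$ identities persist.
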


\begin{proof} With the choice of initial triangulation given in Figure~\ref{Fig:HigherGenus}, these elements are shown to be in the cluster algebra by following the same steps as in Lemma~\ref{Lem:XY} for genus one case. 

\end{proof}
\section{Proof of the main theorem}
\label{Sec:ArbitraryLoop}

First, we prove the following statement.

\begin{lemma}
\label{Lm-any}
The Laurent polynomial associated to any loop or a semi-closed loop in an orbifold with one marked point belongs to the cluster algebra $\cala(\calo)$. 
\end{lemma}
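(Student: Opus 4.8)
The plan is to reduce the case of an arbitrary loop (or semi-closed loop) to the single special loop already handled, namely the essential loop around the boundary, whose Laurent polynomial is known to lie in $\cala(\calo)$ by Theorem~\ref{Thm:LgenArb}. The key structural observation is that in an orbifold with exactly one marked point on its boundary, any loop $\gamma$ can be transported, via a sequence of flips of the triangulation, into a configuration where it plays the role of the boundary-parallel loop $L$ with respect to a new triangulation $T'$. Since the cluster algebra $\cala(\calo)$ does not depend on the choice of triangulation, and since the membership of a Laurent polynomial in $\cala(\calo)$ is a triangulation-independent statement, it suffices to verify the claim in one convenient triangulation for each loop.

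First I would set up the mapping-class-group action: the group generated by flips (equivalently, a suitable subgroup of the mapping class group of $\calo$ fixing the single marked point) acts transitively enough on the relevant isotopy classes of essential loops. More precisely, for a given loop $\gamma$, I would choose a triangulation $T'$ of $\calo$ adapted to $\gamma$ so that $\gamma$ becomes the essential loop around the boundary relative to $T'$, up to the orbifold points it encircles. Then Theorem~\ref{Thm:LgenArb}, applied to the cluster algebra computed from $T'$, yields that the Laurent polynomial of $\gamma$ (expressed in the cluster associated to $T'$) belongs to $\cala(\calo)$. Because mutation carries $\cala(\calo)$ isomorphically onto itself and sends Laurent polynomials in one cluster to Laurent polynomials in any other, membership is preserved, and we conclude the same for $\gamma$ in the original cluster.

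For the semi-closed loops (loops with one endpoint at an orbifold point of weight $1/2$), I would run the same reduction but take care that the adapted triangulation contains a pending arc at the relevant orbifold point, so that the band graph construction of Section~\ref{Sec:SnakeGraphs} applies verbatim. The snake and band graph calculus, together with the specialization procedure via unfoldings described after Figure~\ref{Fig:FlipPending}, then guarantees that the Laurent polynomial of the semi-closed loop is the specialization of a genuine surface band-graph Laurent polynomial, which lies in the cluster algebra; combined with the reduction above this closes the semi-closed case.

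The main obstacle I expect is verifying that the flip sequence realizing $\gamma$ as a boundary-parallel loop genuinely exists for \emph{every} essential loop, including loops that wind nontrivially around several handles and enclose several orbifold points; this is a connectivity/transitivity statement about triangulations that must be argued carefully rather than assumed. A secondary subtlety is bookkeeping the coefficients: one must check that the triangulation-independence of membership holds with \emph{principal} coefficients (not merely trivial ones), so that no coefficient becomes inverted under the flips. Here the remark in the introduction that the argument does not require invertible coefficients is essential, and I would invoke the snake graph calculus precisely to lift the trivial-coefficient identities to principal coefficients, exactly as was done in the proofs of Theorem~\ref{Thm:Lgen1} and Lemma~\ref{Lem:XY}.
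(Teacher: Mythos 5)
Your central reduction does not work. You propose to ``transport'' an arbitrary loop $\gamma$, by a sequence of flips, into a configuration where it ``plays the role of the boundary-parallel loop $L$ with respect to a new triangulation $T'$.'' But whether a loop is boundary-parallel is a topological property of its isotopy class, completely independent of any triangulation: changing the triangulation changes the coordinates in which the Laurent polynomial of $\gamma$ is written, not the loop itself. Equivalently, the mapping class group of $\calo$ preserves the topological type of curves, so a non-separating loop winding around a handle, or a loop enclosing a proper subset of the orbifold points, lies in a different orbit from the boundary-parallel loop and can never be carried to it by flips or by any homeomorphism. Theorem~\ref{Thm:LgenArb} therefore covers exactly one isotopy class of loops, and no triangulation-independence argument extends it to the others. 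You flag this transitivity as ``the main obstacle,'' but it is not an obstacle to be checked carefully --- it is false, so the entire reduction collapses for every loop except $L$ itself.

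What the paper actually does for a general loop is quite different: following \cite[Lemma 4]{CLS}, one uses skein relations (snake graph calculus) to write a product of cluster variables of suitably chosen crossing arcs as a sum in which the given loop $\gamma$ appears together with terms built from arcs and from the boundary loop $L$; since arcs are cluster variables and $L\in\cala(\calo)$ by Theorem~\ref{Thm:LgenArb}, one solves for the Laurent polynomial of $\gamma$, noting (as the paper remarks) that its coefficient in the relation is $1$, so no invertibility of coefficients is needed. Your proposal also misses the paper's treatment of semi-closed loops: rather than adapting triangulations with pending arcs, the paper observes that a semi-closed loop (a geodesic joining two orbifold points) and a small loop enclosing it have the same geodesic representative, hence the same Laurent polynomial by \cite[Remark 5.7]{FeTu}, which reduces the semi-closed case to the loop case in one line. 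Without the skein-relation step your argument proves nothing beyond the boundary loop, and without the perturbation observation the semi-closed case is not addressed at all.
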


\begin{proof}
The proof for loops is identical to the one of~\cite[Lemma 4]{CLS}. One can also note using snake graphs calculus that the coefficient of the loop around the boundary in the appropriate relation is equal to one.

For a semi-closed loop (i.e. a geodesic connecting two orbifold points), we can take its small neighborhood and a loop enclosing it, see Figure~\ref{Fig:Semi-closed}. Then the Laurent polynomial of this loop belongs to the cluster algebra as shown above. Now note that this Laurent polynomial is equal to the Laurent polynomial associated to the semi-closed loop (as they have the same geodesic representative, see~\cite[Remark 5.7]{FeTu}). 

\begin{figure} [ht]
	\includegraphics[width=.25\textwidth]{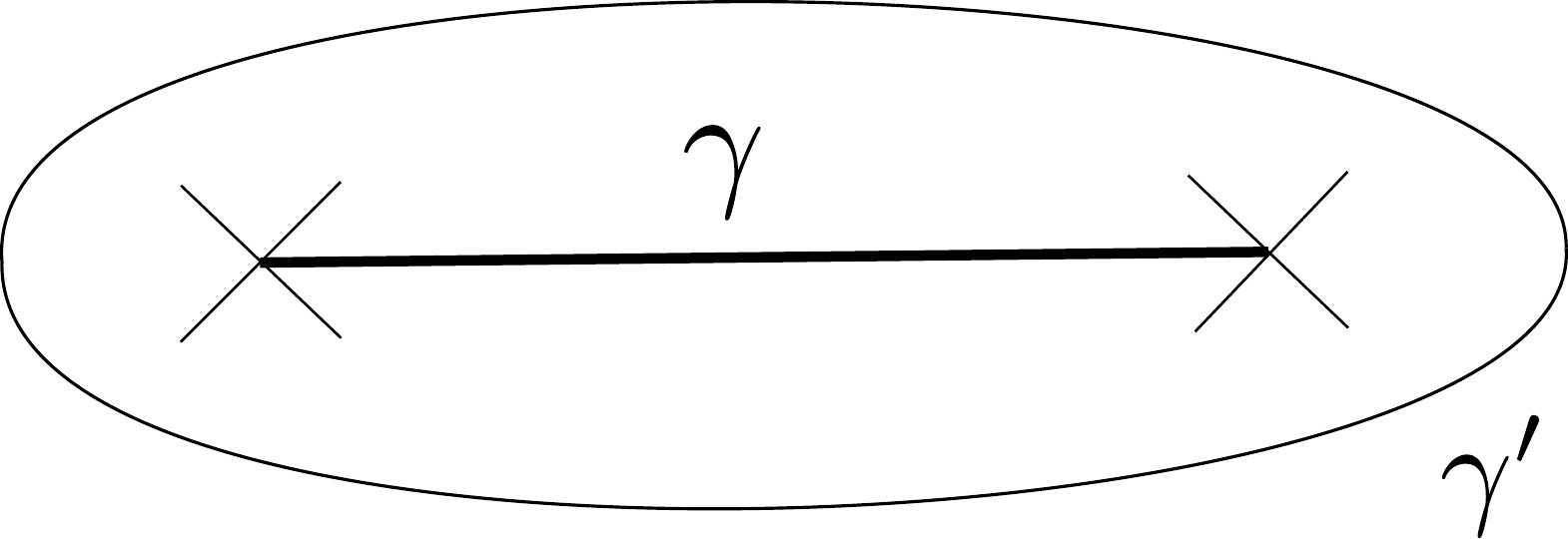}
	\caption{A semi-closed loop $\gamma$ and its small perturbation ${\gamma'}$.}
	\label{Fig:Semi-closed}
\end{figure}
\end{proof}

We can now prove our main result.

\begin{theorem}
\label{Thm:Main}
Let $\cala(\calo)$ be the cluster algebra with principal coefficients associated to an orbifold $\calo$ with exactly one marked point on its boundary. Then the bangles $\calb^\circ$, the bands $\calb^\sigma$, and the bracelets $\calb$ are bases of the cluster algebra $\cala(\calo)$.

\end{theorem}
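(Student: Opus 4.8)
The plan is to follow the strategy of~\cite[Theorem 2]{CLS}, reducing the statement about bases to the single analytic input provided by Theorem~\ref{Thm:LgenArb} together with Lemma~\ref{Lm-any}. Recall that the bangle set $\calb^\circ$, the band set $\calb^\sigma$, and the bracelet set $\calb$ are all indexed by collections of mutually non-crossing arcs and loops in $\calo$, the only difference lying in how a loop component is encoded (as a bangle, a band element, or a bracelet). The key point, established already in~\cite{FeTu} for orbifolds with at least two boundary marked points, is that each of these three sets is a basis provided its elements lie in $\cala(\calo)$. Thus the entire content of the theorem, in the one-marked-point setting, is the assertion that every element of each set is a genuine member of the cluster algebra, i.e. a (positive) Laurent polynomial in the initial cluster variables.

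First I would recall why membership holds automatically for the ``arc part'' of any generator. For a collection consisting only of arcs (possibly with self-crossings), the associated Laurent polynomial is obtained from the snake-graph expansion of~\cite{MSW,MSW2}, which was extended to the orbifold case in Section~\ref{Sec:SnakeGraphs} via unfoldings; these expressions are by construction elements of $\cala(\calo)$. The difficulty, and the reason the two-marked-point hypothesis was needed in~\cite{FeTu}, is entirely concentrated in the loop components: for a single marked point one cannot directly exhibit the Laurent polynomial of a closed loop as a cluster algebra element by the usual arguments, because the relevant auxiliary arcs degenerate. This is exactly the gap that Theorem~\ref{Thm:LgenArb} and Lemma~\ref{Lm-any} are designed to fill.

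Next I would assemble the pieces. By Lemma~\ref{Lm-any}, the Laurent polynomial of any loop, and of any semi-closed loop, lies in $\cala(\calo)$. For bracelets one must handle powers and products of loops: here I would invoke the product-and-resolution identities for bracelets (the skein-type relations used in~\cite{MSW2,T,FeTu}) to express any bracelet, band, or bangle element as an integer combination of products of single-loop Laurent polynomials and arc Laurent polynomials, each of which has already been shown to lie in $\cala(\calo)$. Since $\cala(\calo)$ is a ring, any such combination is again in $\cala(\calo)$. Combined with the linear independence and spanning properties carried over verbatim from~\cite{FeTu} (these arguments are insensitive to the number of marked points, depending only on the combinatorics of tagged-arc or ordinary-arc compatibility), this yields that each of $\calb^\circ$, $\calb^\sigma$, $\calb$ is a basis.

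The main obstacle I anticipate is not the membership step, which Theorem~\ref{Thm:LgenArb} settles for the essential boundary loop and Lemma~\ref{Lm-any} extends to all loops and semi-closed loops, but rather verifying that the reduction from a general multi-loop bracelet to products of single loops is valid with \emph{principal coefficients} and without assuming the coefficients are invertible. In~\cite{CLS} invertibility of coefficients was used; the present approach, built on snake-graph calculus (grafting, self-grafting, and resolution of crossings as in Figures~\ref{Fig:SGLoopGen1} and~\ref{Fig:SGy4X}), is designed precisely to avoid this, so the delicate point is to confirm that every skein relation invoked respects the principal-coefficient grading and produces genuine Laurent polynomials rather than merely rational expressions. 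Once this is checked, the linear independence and spanning statements transfer directly from~\cite{FeTu}, and the theorem follows.
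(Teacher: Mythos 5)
Your proposal is correct and takes essentially the same route as the paper: membership of arc, loop, and semi-closed-loop Laurent polynomials via the snake-graph expansion and Lemma~\ref{Lm-any}, linear independence and spanning imported from~\cite{FeTu} (which are insensitive to the number of marked points), and the passage between bangles, bands, and bracelets handled by Chebyshev-type skein relations---which is precisely the unimodular linear transformation of~\cite[Section 6.2]{FeTu} that the paper cites to reduce everything to $\calb^\circ$. The only cosmetic difference is that you verify membership for all three sets directly, whereas the paper verifies it only for the bangles $\calb^\circ$ and transfers basis-ness to $\calb^\sigma$ and $\calb$ through that unimodular change of basis.
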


\begin{proof}
According to~\cite[Section 6.2]{FeTu}, $\calb^\sigma$ and $\calb$ can be obtained from bangles $\calb^\circ$ by a unimodular linear transformation, which means it is sufficient to prove the statement for $\calb^\circ$ only. Elements of $\calb^\circ$ are products of Laurent polynomials associated to mutually compatible arcs, loops and semi-closed loops. According to Lemma~\ref{Lm-any}, all the elements of $\calb^\circ$ belong to $\cala(\calo)$. The linear independence is proved in~\cite[Section 8]{FeTu}, the number of boundary points is irrelevant for the proof. 
\end{proof}

\begin{remark}
One can note that all the properties of the bases $\calb^\circ$, $\calb^\sigma$ and $\calb$ proved in~\cite{FeTu} remain intact for the case of one boundary point (as the proofs did not use the assumption on this number). For example, the bracelets basis $\calb$ is {\em positive}, i.e. it has positive structure constants. 

\end{remark}


\end{document}